\providecommand{\U}[1]{\protect\rule{.1in}{.1in}}
\newtheorem{theorem}{Theorem}
\newtheorem{example}[theorem]{Example}
\newtheorem{lemma}[theorem]{Lemma}
\newtheorem{remark}[theorem]{Remark}
\newenvironment{proof}[1][Proof]{\noindent\textbf{#1.} }{\ \rule{0.5em}{0.5em}}
\begin{document}

\title{Aspects of the Segre variety $\mathcal{S}_{1,1,1}(2)$}
\author{Ron Shaw, Neil Gordon, Hans Havlicek}
\maketitle

\begin{abstract}
We consider various aspects of the Segre variety $\mathcal{S}:=\mathcal{S}%
_{1,1,1}(2)$ in $\operatorname{PG}(7,2),$ whose stabilizer group
$\mathcal{G}_{\mathcal{S}}<\operatorname*{GL}(8,2)$ has the structure
$\mathcal{N}\rtimes\operatorname*{Sym}(3),$ where $\mathcal{N}%
:=\operatorname*{GL}(2,2)\times\operatorname*{GL}(2,2)\times\operatorname*{GL}%
(2,2).$ In particular we prove that $\mathcal{S}$ determines a distinguished
$Z_{3}$-subgroup $\mathcal{Z}<\operatorname*{GL}(8,2)$ such that
$A\mathcal{Z}A^{-1}=\mathcal{Z},$ for all $A\in\mathcal{G}_{\mathcal{S}},$ and
in consequence $\mathcal{S}$ determines a $\mathcal{G}_{\mathcal{S}}%
$-invariant spread of $85$ lines in $\operatorname*{PG}(7,2).$ Furthermore we
see that Segre varieties $\mathcal{S}_{1,1,1}(2)$ in $\operatorname{PG}(7,2)$
come along in triplets $\{\mathcal{S},\mathcal{S}^{\prime},\mathcal{S}%
^{\prime\prime}\}$ which share the same distinguished $Z_{3}$-subgroup
$\mathcal{Z}<\operatorname*{GL}(8,2).$ We conclude by determining all fifteen
$\mathcal{G}_{\mathcal{S}}$-invariant polynomial functions on
$\operatorname*{PG}(7,2)$ which have degree \thinspace$<8$, and their relation
to the five $\mathcal{G}_{\mathcal{S}}$-orbits of points in
$\operatorname*{PG}(7,2).$

\end{abstract}
\date{~}

\bigskip

\noindent\emph{MSC2010: 51E20, 05B25, 15A69}

\noindent\emph{Key words: Segre variety }$\mathcal{S}_{1,1,1}(2);$
\emph{invariant polynomials, line-spread}

\section{Introduction \label{Sec Intro}}

We work over $\operatorname*{GF}(2),$ and so \emph{we may identify the nonzero
elements of a vector space }$V(n+1,2)=V_{n+1}$ \emph{ with the points }$S_{0}$
\emph{of the associated projective space }$\mathbb{P}V_{n+1}=\operatorname{PG}%
(n,2).$ Consequently we identify $\operatorname*{GL}(V_{n+1}%
)=\operatorname*{GL}(n+1,2)$ with the group $\operatorname*{PGL}(n+1,2)$ of
collineations of $\operatorname*{PG}(n,2).$ We use $\prec u,v,\ldots\succ$ for
the vector subspace spanned by vectors $u,v,\ldots,$ and $\langle
u,v,\ldots\rangle$ for the flat (projective subspace) generated by projective
points $u,v,\ldots\;.$ The vector space $F(S_{0})$ of all functions
$S_{0}\rightarrow\operatorname*{GF}(2)$ is of dimension $|S_{0}|=2^{n+1}-1.$
Given a point-set $\psi\subset\operatorname{PG}(n,2)$ it has equation
$\tilde{Q}(x)=0$ for some polynomial $\tilde{Q}$ satisfying $\tilde{Q}(0)=0.$
Upon replacing $(x_{i})^{r_{i}},r_{i}>1,$ by $x_{i}$ in any such polynomial we
obtain a \emph{uniquely determined} polynomial $Q=Q_{\psi}$ of the form $\sum
x_{i_{1}}\cdots x_{i_{k}}$, $1\leq i_{1}<\cdots<i_{k}\leq n+1$. (This
uniqueness does \emph{not} hold for a point-set $\psi\subset\operatorname{PG}%
(n,q)$ for $q>2:$ see for example \cite[Remark 1.2]{ShawPsiAssociate}.)
Briefly stated, \emph{every point-set of }$\operatorname*{PG}(n,2)$\emph{ is a
hypersurface. }The (reduced) degree $d=\deg Q$ of $Q$ is the \emph{polynomial
degree }of the point-set $\psi.$ (On account of the afore-mentioned
uniqueness, the reduced degree $d$ of $Q$ is seen to be independent of the
coordinate system.) It should be noted, see \cite[Section 1.2]%
{ShawGordonPolDegGrass}, that \emph{if }$|\psi|$\emph{ is odd then }$d\leq
n$\emph{ while if }$|\psi|$\emph{ is even then }$d=n+1.$ Further note that if
$F_{d}=F_{d}(S_{0}),$ $d>0,$ denotes the subspace of $F(S_{0})$ which consists
of functions $f$ expressible as a polynomial function $f(x_{1},x_{2}%
,\ldots,x_{n+1})$ with $\deg f\leq d$ and $f(0)=0,\ $then the subspaces
$F_{d}$ are nested:%
\begin{equation}
F_{1}\subset F_{2}\subset\cdots\subset F_{n}\subset F_{n+1}=F(S_{0}).
\label{Nesting of F_r}%
\end{equation}

Given a choice of subset $\psi\subset\operatorname{PG}(n,2)$ a flat $X$ of
$\operatorname{PG}(n,2)$ is termed $\psi$-\emph{odd} whenever $|X\cap\psi|$ is
odd and $\psi$-\emph{even} whenever $|X\cap\psi|$ is even. The next lemma
shows that the degree $d=\deg Q$ can be determined from the point-set $\psi$
purely by incidence properties.

\begin{lemma}
\label{Lem pol deg d iff (i), (ii)}(See \cite[Theorem 1.1]{ShawBBC20}.) If
$|\psi|$ is odd then $Q$ has polynomial degree $d$ if and only if \emph{(i)
}every $d$-flat is $\psi$-odd and \emph{(ii)} there exists at least one
$(d-1)$-flat which is $\psi$-even. (Here condition (i) $\Longrightarrow\deg
Q\leq d,$ and condition (ii) $\Longrightarrow\deg Q\geq d.$)
\end{lemma}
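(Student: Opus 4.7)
The plan is to convert the parity statements about $|X\cap\psi|$ into algebraic statements about $Q$, and then to pin down $\deg Q$ by evaluating $Q$ on a carefully chosen family of subspaces.

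First I would rewrite ``$\psi$-odd'' algebraically. If $X\subset\operatorname{PG}(n,2)$ is a nonempty flat corresponding to the linear subspace $W\subset V_{n+1}$ (so $X=W\setminus\{0\}$), then since $\psi$ is the zero-set of $Q$ and $Q(0)=0$ we have $|X\cap\psi|=(2^{\dim W}-1)-\sum_{x\in W}Q(x)$, so
\[
|X\cap\psi|\equiv 1+\sum_{x\in W}Q(x)\pmod{2}.
\]
Hence $X$ is $\psi$-odd if and only if $\sum_{x\in W}Q(x)=0$ in $\operatorname{GF}(2)$.

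This reduces the lemma to the master equivalence: for every $k\ge 0$,
\[
\deg Q\le k \iff \sum_{x\in W}Q(x)=0 \text{ for every subspace $W$ of dimension $k+1$.}
\]
The forward direction is routine: for any monomial $x_I$ with $|I|\le k$ and any $W$ of dimension $k+1$, $\sum_{x\in W}x_I$ counts the points of $W$ satisfying the $|I|$ linear equations $x_i=1$ ($i\in I$); since $\dim W>|I|$, the kernel of the projection $W\to\operatorname{GF}(2)^I$ has positive dimension, so the nonempty fiber over $(1,\ldots,1)$ has even size and contributes $0\pmod 2$.

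The reverse direction is the heart of the proof: one needs a family of test subspaces rich enough to separate the top-degree monomials of $Q$. I would use the following family. Fix a $k$-subset $I_0\subset\{1,\ldots,n+1\}$ and a nonempty $J\subseteq\{1,\ldots,n+1\}\setminus I_0$, and let $W_{I_0,J}$ be the $(k+1)$-dimensional subspace spanned by $\{e_i:i\in I_0\}$ together with the vector $\sum_{j\in J}e_j$. Writing an arbitrary $x\in W_{I_0,J}$ in this basis and expanding $Q=\sum_I c_I x_I$, a direct computation yields
\[
\sum_{x\in W_{I_0,J}}Q(x)=\sum_{\emptyset\ne K\subseteq J}c_{I_0\cup K}.
\]
Assuming these sums vanish for every such $J$, an easy induction on $|J|$ forces $c_{I_0\cup K}=0$ for every nonempty $K$, i.e., $c_I=0$ for every $I\supsetneq I_0$. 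Letting $I_0$ range over all $k$-subsets yields $c_I=0$ for every $I$ with $|I|>k$, so $\deg Q\le k$. This is the step I expect to be the main obstacle: one has to spot the right family of test subspaces, combining $k$ coordinate vectors with one ``diagonal'' generator, to make the Möbius-style extraction work.

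Finally I would apply the master equivalence with $k=d$ to obtain (i)$\iff\deg Q\le d$, and with $k=d-1$ to obtain $\deg Q\le d-1 \iff$ every $(d-1)$-flat is $\psi$-odd; negating both sides gives (ii)$\iff\deg Q\ge d$. Conjoining yields $\deg Q=d$ iff both (i) and (ii) hold, which is the lemma. The hypothesis that $|\psi|$ is odd is not strictly needed in the argument itself but ensures $d\le n$, placing us in the nontrivial range in which the claim is meant to be applied.
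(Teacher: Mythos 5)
Your argument is correct, but there is nothing in the paper to compare it with: the authors state this lemma as a quotation of Theorem~1.1 of the cited reference [ShawBBC20] and give no proof of it here. Your reduction of ``$\psi$-odd'' to the vanishing of $\sum_{x\in W}Q(x)$ in $\operatorname{GF}(2)$ is right (since $Q(0)=0$ and $|W|$ is even), the forward direction via fibres of the projection $W\to\operatorname{GF}(2)^I$ is the standard counting argument, and your key identity
\[
\sum_{x\in W_{I_0,J}}Q(x)=\sum_{\emptyset\ne K\subseteq J}c_{I_0\cup K}
\]
checks out: a monomial $x_I$ contributes $1$ to the left side exactly when $I_0\subseteq I\subseteq I_0\cup J$ and $I\cap J\ne\emptyset$, because summing over the scalar $s$ kills the terms with $I\cap J=\emptyset$ and summing over $(t_i)_{i\in I_0}$ kills the terms with $I_0\not\subseteq I$. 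The induction on $|J|$ then extracts $c_{I_0\cup K}=0$ for all nonempty $K$, and running $I_0$ over all $k$-subsets gives $\deg Q\le k$; applying the equivalence at $k=d$ and $k=d-1$ yields the lemma. Your closing remark is also accurate: the parity hypothesis on $|\psi|$ is not used in the equivalence itself but only guarantees $d\le n$, so that the two conditions are being applied in a nonvacuous range. In short, this is a complete and correct self-contained proof of a result the paper imports without proof.
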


When attempting to determine the polynomial degree of a hypersurface $\psi$ in
$\operatorname*{PG}(n,2)$ it often helps to make use of the following
elementary lemma.

\begin{lemma}
\label{Lem poldeg of flat X}Each $(n-d)$-flat $X$ in $\operatorname*{PG}(n,2)$
has polynomial degree $d.$ In detail, if $X$ is the intersection of the $d$
independent hyperplanes $f_{1}(x)=0,\ldots,f_{d}(x)=0,$ where the $f_{i}$ are
elements of the dual $V_{n+1}^{\ast}$ of $V_{n+1},$ then $X$ has equation
\begin{equation}
P_{X}(x)=0,\text{ \quad where }P_{X}:=1+\Pi_{i=1}^{d}(1+f_{i}), \label{Q_X}%
\end{equation}
the polynomial $P_{X}$ thus having degree $d.$
\end{lemma}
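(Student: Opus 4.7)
The plan is to prove both assertions by direct computation. First I would verify that the polynomial $P_X$ given in the statement really does have $X$ as its zero set on $\operatorname{PG}(n,2)$, and then I would read off its reduced degree.

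For the verification, I would argue case by case over $\operatorname{GF}(2)$. If $x \in X$, each $f_i(x) = 0$, so every factor $1+f_i(x)$ equals $1$; hence $\prod_{i=1}^d(1+f_i(x)) = 1$ and $P_X(x) = 0$. If $x$ is a nonzero vector outside $X$, then some $f_j(x) = 1$, that factor vanishes, the product is $0$, and $P_X(x) = 1$. Finally $P_X(0) = 0$ because each linear form $f_i$ vanishes at the origin, so $P_X$ is of the admissible form $\tilde Q$ with $\tilde Q(0) = 0$ used in the introduction.

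For the degree, I would exploit the linear independence of $f_1,\ldots,f_d$ to extend them to a basis of $V_{n+1}^{\ast}$, producing coordinates $(y_1,\ldots,y_{n+1})$ in which $f_i = y_i$ for $1 \le i \le d$. In these coordinates
\[
P_X \;=\; 1 + \prod_{i=1}^d(1+y_i) \;=\; \sum_{\emptyset \neq S \subseteq \{1,\ldots,d\}}\; \prod_{i \in S} y_i,
\]
which is already of the reduced form $\sum y_{i_1}\cdots y_{i_k}$ from the introduction and contains the top monomial $y_1 y_2\cdots y_d$. By the uniqueness of the reduced polynomial attached to the point-set $X$, this displays $Q_X$ itself, and the coordinate-independence of the reduced degree (also noted in the introduction) gives $\deg Q_X = d$.

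The only point requiring care is that in the original coordinates the product $\prod(1+f_i)$ need not be in reduced form, so the substitution $x_i^{r_i}\mapsto x_i$ could a priori lower its visible degree through cancellations. Passing to coordinates in which $f_i = y_i$ removes this entirely: no reduction is then needed, and the leading monomial $y_1 \cdots y_d$ has no partner to cancel against.
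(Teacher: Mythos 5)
Your verification of the zero set is exactly the paper's one-line proof: $1+\prod_{i=1}^{d}(1+f_{i}(x))$ equals $1$ except when $f_{1}(x)=\cdots=f_{d}(x)=0$. The paper stops there and leaves the degree claim implicit, so your additional step of passing to coordinates adapted to $f_{1},\ldots,f_{d}$ --- in which the expansion is already reduced and the monomial $y_{1}\cdots y_{d}$ visibly survives --- is a correct and worthwhile supplement, since in arbitrary coordinates the reduction $x_{i}^{r_{i}}\mapsto x_{i}$ could a priori cause cancellation.
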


\begin{proof}
$1+\Pi_{i=1}^{d}(1+f_{i}(x))$ equals $1$ except when $f_{1}(x)=\cdots
=f_{d}(x)=0.$
\end{proof}

\medskip

In fact, from now onwards, we confine our attention to projective dimension
$n=7,$ and moreover will deal solely with the Segre variety $\mathcal{S}%
_{1,1,1}(2)\subset\operatorname*{PG}(7,2),$ see \cite{Burau} or
\cite{HirschfeldThas3}, and with its stabilizer $\mathcal{G}_{\mathcal{S}%
}:=\mathcal{G}_{\mathcal{S}_{1,1,1}(2)}<\operatorname*{GL}(V_{8}).$ We will be
particularly concerned with various $\mathcal{G}_{\mathcal{S}}$-invariant
attributes of the Segre variety and so will overlap at times with some of the
material in the recent paper \cite{HavlicekOdehnalSaniga}. However we will
work entirely over the field $\operatorname*{GF}(2),$ in contrast to the
frequent excursions into $\operatorname*{GF}(4)$ territory undergone in
\cite{HavlicekOdehnalSaniga}. There will also be neat connections to
\cite{GreenSaniga}. The $3\times3\times3$ grid and its Veldkamp space from
\cite{GreenSaniga} are in our setting the Segre variety $\mathcal{S}%
_{1,1,1}(2)$ (considered merely as a point-line geometry) and the dual of the
ambient $\operatorname*{PG}(7,2)$ (recovered in terms of this point-line
geometry). It is worth noting that the authors of \cite{GreenSaniga} took
their motivation from physics. They were looking for finite geometries
potentially allowing physical applications, similar to the ones in
\cite{HavlicekOdehnalSaniga1}, \cite{LevaySanigaVrana} and
\cite{LevaySanigaVranaPracna}, where a class of finite symplectic polar spaces
and certain finite generalized polygons were successfully linked with
\emph{quantum information theory} (commuting and non-commuting elements of
\emph{Pauli groups}) and the theory of \emph{black holes and black strings}
(symmetry properties of \emph{entropy formulae}). The cited papers contain a
wealth of further references on related work. Another link to quantum
information theory was pointed out in \cite[Section~5.5]{Glynn et al}: The
ambient space $\operatorname*{PG}(7,2)$ of the Segre variety $\mathcal{S}%
_{1,1,1}(2)$ may be regarded as a finite analogue to the \emph{state-space of
a three qubit system} (over the complex numbers).

\section{The Segre variety $\mathcal{S}:=\mathcal{S}_{1,1,1}(2)$
\label{Sec Segre variety S}}

If $V_{2}=V(2,2)$ is a 2-dimensional vector space  over $\operatorname*{GF}%
(2)$ then the tensor product space $V_{8}:=V_{2}\otimes V_{2}\otimes V_{2}%
\ $is of dimension $8$ over $\operatorname*{GF}(2)$ and contains
$3\times3\times3=27$ nonzero decomposable tensors. Viewed as elements of
$\mathbb{P}V_{8}=\operatorname*{PG}(7,2)$ these decomposable tensors
constitute the Segre variety $\mathcal{S}:=\mathcal{S}_{1,1,1}(2)\subset
\operatorname*{PG}(7,2).$ If the three points of the projective line
$\mathbb{P}V_{2}$ are denoted $\{u_{0},u_{1},u_{2}\},$ then
\begin{equation}
\mathcal{S}:=\mathcal{S}_{1,1,1}(2)=\{E_{ijk},\quad i,j,k\in\{0,1,2\}\}\quad
\text{where }E_{ijk}:=u_{i}\otimes u_{j}\otimes u_{k}.\label{Segre 111}%
\end{equation}
The stabilizer $\mathcal{G}_{\mathcal{S}}:=\mathcal{G}_{\mathcal{S}%
_{1,1,1}(2)}<\operatorname*{GL}(V_{8})$ of $\mathcal{S}$ has the semidirect
product structure%
\begin{equation}
\mathcal{G}_{\mathcal{S}}=\mathcal{N}\rtimes\operatorname*{Sym}(3),\quad
\text{where }\mathcal{N}:=\operatorname*{GL}(V_{2})\times\operatorname*{GL}%
(V_{2})\times\operatorname*{GL}(V_{2}),\label{G_S = N sdp Sym(3)}%
\end{equation}
and so is of order $6^{4}=1296.$ Here the element $(a_{0},a_{1},a_{2}),$
$a_{i}\in\operatorname*{GL}(V_{2}),$ of the normal subgroup $\mathcal{N}$ of
$\mathcal{G}_{\mathcal{S}}$ acts upon $V_{8}$ by the tensor product map
$a_{0}\otimes a_{1}\otimes a_{2}\in\operatorname*{GL}(V_{8}),$ and the action
of $\rho\in\operatorname*{Sym}(3)$ upon $V_{8}=\otimes^{3}V_{2}$ is by the
linear operator (see for example \cite[Section 8.8.1]{ShawVol2})
$\rho_{\text{op}}$ given by $\rho_{\text{op}}(v_{1}\otimes v_{2}\otimes
v_{3})=v_{\rho^{-1}1}\otimes v_{\rho^{-1}2}\otimes v_{\rho^{-1}3}.$ (Thus
$\mathcal{G}_{\mathcal{S}}$ is a wreath product $\mathcal{N}\wr
\operatorname*{Sym}(3).)$

\subsection{Some invariant attributes of the Segre variety $\mathcal{S}%
_{1,1,1}(2).$\label{SSec Invariant attributes}}

As well as its twenty-seven points $E_{ijk}$ in (\ref{Segre 111}), we will
make use of the following $\mathcal{G}_{\mathcal{S}}$-invariant attributes of
the Segre variety $\mathcal{S}_{1,1,1}(2).$

\begin{enumerate}
\item The set of twenty-seven \emph{generators} $\{L_{ij}^{r},~i,j\in
\{0,1,2\},~r\in\{1,2,3\}\}$ of $\mathcal{S}_{1,1,1}(2),$ where the lines
$L_{ij}^{r}$ are defined, for $i,j\in\{0,1,2\},$ by%
\begin{equation}
L_{ij}^{1}=\{E_{kij}\}_{k\in\{0,1,2\}},~~L_{ij}^{2}=\{E_{ikj}\}_{k\in
\{0,1,2\}},~~L_{ij}^{3}=\{E_{ijk}\}_{k\in\{0,1,2\}}. \label{27 generators L}%
\end{equation}

\item The set of nine copies $\{\sigma_{i}^{r},~i\in\{0,1,2\},~r\in
\{1,2,3\}\}$ of a Segre variety $\mathcal{S}_{1,1}(2)$ which are contained in
the Segre variety (\ref{Segre 111}), where%
\begin{equation}
\hspace*{-0.4in}\sigma_{i}^{1}:=\{E_{ijk}\}_{j,k\in\{0,1,2\}},~~\sigma_{i}%
^{2}:=\{E_{jik}\}_{j,k\in\{0,1,2\}},~~\sigma_{i}^{3}:=\{E_{jki}\}_{j,k\in
\{0,1,2\}}. \label{nine S_1,1}%
\end{equation}

\item The set of nine ambient spaces $\{Y_{i}^{r}=\langle\sigma_{i}^{r}%
\rangle,\quad i\in\{0,1,2\},~r\in\{1,2,3\}\}$ of the nine $\mathcal{S}%
_{1,1}(2)$s in (\ref{nine S_1,1}), each $Y_{i}^{r}$ being of course a $3$-flat.

\item The set of twenty-seven $3$-flats
\begin{equation}
\{Z_{ijk}=\langle L_{jk}^{1},L_{ik}^{2},L_{ij}^{3}\rangle,\quad i,j,k\in
\{0,1,2\}\}, \label{27 3-flats Z}%
\end{equation}
with $Z_{ijk}=Z(p)$ being spanned by the three generators which pass through
the point $p=E_{ijk}.$

\item The set of twenty-seven \emph{distinguished tangents }$\{L(p),p\in
\mathcal{S}\},$ defined by
\begin{equation}
L(p):=\{p,p^{\prime},p^{\prime\prime}\} \label{L(p)}%
\end{equation}
where $p^{\prime},p^{\prime\prime}$ are those two points of the $3$-flat
$Z(p),$ see (\ref{27 3-flats Z}), which are external to each of the three
planes which are spanned by two of the generators through $p.$
\end{enumerate}

\begin{remark}
It may well help to visualize these attributes of the Segre variety
$\mathcal{S}_{1,1,1}(2)$ by appealing to the `$27$-cube' in \cite[Section
3.1]{GreenSaniga} or \cite[Fig.~1]{HavlicekOdehnalSaniga}. In particular the
nine $\mathcal{S}_{1,1}(2)$s in (\ref{nine S_1,1}) can be visualized as nine
`sections' of the $27$-cube, three horizontal and six vertical.
\end{remark}

\subsection{Some subgroups of $\mathcal{G}_{\mathcal{S}}$
\label{SSec Subgroups of G_S}}

In the following we will often choose $\mathcal{B=}\{e_{1},e_{2},\ldots
,e_{8}\}$ as basis for $V_{8},$ where%
\begin{align}
e_{1}  &  =E_{000},~e_{2}=E_{100},~e_{3}=E_{110},~e_{4}=E_{010},\nonumber\\
e_{8}  &  =E_{111},~e_{7}=E_{011},~e_{6}=E_{001},~e_{5}=E_{101}.
\label{Basis e_i}%
\end{align}
(So the multi-indices of $e_{1},e_{3},e_{5},e_{7}$ have even parity and those
of their $`$opposites' $e_{8},e_{6},e_{4},e_{2}$ have odd parity.) If we view
these basis elements in terms of the eight vertices of the `$8$-cube' in
Figure \ref{fig:cube}
%%%%%%%%%%%%%%%%%%%%% BEGIN %%%%%%%%%%%%%%%%%%%%%%%%%%%%%%%%%%%%%%%%%%%%
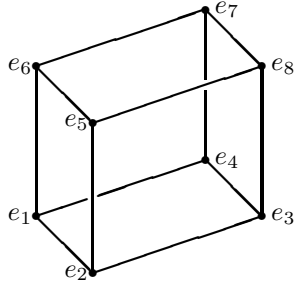
\begin{figure}[h]
\unitlength0.5cm
%% rescale by changing the unitlength
\centering\begin{picture}(6,7)\thicklines
%% basis
\put(0,1.5){\line(1,-1){1.5}}
\put(0,1.5){\line(3,1){1.35}}\put(4.5,3){\line(-3,-1){2.85}}
\put(4.5,3){\line(1,-1){1.5}}
\put(1.5,0){\line(3,1){4.5}}
\put(0,1.5){\circle*{0.2}}
\put(4.5,3){\circle*{0.2}}
\put(6,1.5){\circle*{0.2}}
\put(1.5,0){\circle*{0.2}}
%% top
\put(0,5.5){\line(1,-1){1.5}}
\put(0,5.5){\line(3,1){4.5}}
\put(4.5,7){\line(1,-1){1.5}}
\put(1.5,4){\line(3,1){4.5}}
\put(0,5.5){\circle*{0.2}}
\put(4.5,7){\circle*{0.2}}
\put(6,5.5){\circle*{0.2}}
\put(1.5,4){\circle*{0.2}}
%% vertical edges
\put(0,1.5){\line(0,1){4}}
\put(4.5,3){\line(0,1){1.85}}\put(4.5,7){\line(0,-1){1.85}}
\put(6,1.5){\line(0,1){4}}
\put(1.5,0){\line(0,1){4}}
%% legend
%% shift -0.75, -0.1 if left of vertex
%% shift 0.25, -0.1 if right of right
%% shift 4.0        vertically
%% if the size is changed to a big extent, it is necessary to re-shift
%% letters slightly, because the font size does not change.
\put(-0.75,1.4){$e_1$}\put(-0.75,5.4){$e_6$}
\put(4.75,2.9){$e_4$}\put(4.75,6.9){$e_7$}
\put(6.25,1.4){$e_3$}\put(6.25,5.4){$e_8$}
\put(0.75,-0.1){$e_2$}\put(0.75,3.9){$e_5$}
\end{picture}
\caption{the `8-cube'}%
\label{fig:cube}%
\end{figure}
%%%%%%%%%%%%%%%%%%%% END %%%%%%%%%%%%%%%%%%%%%%%%%%%%%%%%%%%%%%%%%%%%%%
then the two regular tetrahedra embedded in this 8-cube have vertices
$\{e_{1},e_{3},e_{5},e_{7}\}$ and $\{e_{2},e_{4},e_{6},e_{8}\}.$ At times we
will view $\langle e_{1},e_{2}\rangle$ as the `$x$-axis', $\langle e_{1}%
,e_{4}\rangle$ as the `$y$-axis', $\langle e_{1},e_{6}\rangle$ as the `$z$-axis'.

Clearly the group $\mathcal{G}_{\mathcal{B}}\cong\operatorname*{Sym}(4)\times
Z_{2}$ of 3-dimensional symmetries of the cube with vertices $\mathcal{B}$
will be a subgroup of $\mathcal{G}_{\mathcal{S}}$. The subgroup $\mathcal{G}%
_{\mathcal{B}}<\mathcal{G}_{\mathcal{S}}$ contains all $4!=24$ permutations of
the four space diagonals $\{e_{1},e_{8}\},\{e_{2},e_{7}\},\{e_{3}%
,e_{6}\},\{e_{4},e_{5}\}$ of the cube and also the central involution
\begin{equation}
J:(18)(27)(36)(45) \label{J}%
\end{equation}
which fixes each space diagonal. (Here, and below, we often use shorthand
notation; in particular $(18)$ is shorthand for $e_{1}\rightleftarrows
e_{8}.)$ Alternatively $\mathcal{G}_{\mathcal{B}}$ may be defined to be that
subgroup of $\mathcal{G}_{\mathcal{S}}$ which fixes the unit point $u:=%
%TCIMACRO{\tsum \nolimits_{i=1}^{8}}%
%BeginExpansion
{\textstyle\sum\nolimits_{i=1}^{8}}
%EndExpansion
e_{i}$ of the basis $\mathcal{B}.$

If $j_{x}\in\operatorname*{GL}(V_{2})$ is the element of order 2 which effects
the interchange $u_{0}\rightleftarrows u_{1},$ then, in shorthand notation,
the element $J_{x}=j_{x}\otimes I\otimes I\in\mathcal{N}\vartriangleleft
\mathcal{G}_{\mathcal{S}}$ is given in terms of the basis (\ref{Basis e_i})
by:%
\begin{equation}
J_{x}:(12)(34)(56)(78). \label{Jx}%
\end{equation}
Similarly we arrive at two further involutions $J_{y},J_{z}\in\mathcal{N}%
\vartriangleleft\mathcal{G}_{\mathcal{S}}$:%
\begin{equation}
J_{y}:(14)(23)(58)(67),\quad J_{z}:(16)(25)(38)(47). \label{Jy,Jz}%
\end{equation}
The three involutions (\ref{Jx}), (\ref{Jy,Jz}) satisfy $J_{x}J_{y}J_{z}=J,$
and are elements of $\mathcal{G}_{\mathcal{B}}.$

Next, if $a\in\operatorname*{GL}(V_{2})$ is the element of order 3 which
effects the cyclic permutation $(u_{0}u_{1}u_{2})$ then the element
$A_{x}=a\otimes I\otimes I\in\mathcal{G}_{\mathcal{S}}$ is given by%
\begin{align}
A_{x}: &  e_{1}\mapsto e_{2}\mapsto e_{1}+e_{2},\quad e_{4}\mapsto
e_{3}\mapsto e_{3}+e_{4},\nonumber\\
&  e_{6}\mapsto e_{5}\mapsto e_{5}+e_{6},\quad e_{7}\mapsto e_{8}\mapsto
e_{7}+e_{8}.\label{A_x}%
\end{align}
Similarly there are two further elements $A_{y}=I\otimes a\otimes I$ and
$A_{z}=I\otimes I\otimes a$, each of order 3, which belong to $\mathcal{G}%
_{\mathcal{S}}$ but not to $\mathcal{G}_{\mathcal{B}}.$

The linear mapping $K_{12}:=\rho_{\text{op}}$ arising from the interchange
$\rho=(12)\in\operatorname*{Sym}(3)$ is another involution $\in\mathcal{G}%
_{\mathcal{B}}$ given by
\begin{equation}
K_{12}:(24)(57),\quad1,3,6,8\text{ fixed.} \label{J12}%
\end{equation}
Other involutions $K_{13},K_{23}\in\mathcal{G}_{\mathcal{B}}$ can similarly be
arrived at. Observe that the product $C:=J_{x}K_{12}$ is an element of
$\mathcal{G}_{\mathcal{B}}$ of order 4:
\begin{equation}
C:(1234)(8765). \label{JxJ12}%
\end{equation}
Setting $B:=\rho_{\text{op}}$ with the choice $\rho=(123)\in
\operatorname*{Sym}(3),$ then $B$ is an element of $\mathcal{G}_{\mathcal{B}}$
of order 3 given by
\begin{equation}
B:(375)(246),\quad1,8\text{ fixed.} \label{B}%
\end{equation}

\subsubsection{The subgroup $\mathcal{G}_{\mathcal{S}}^{0}=\mathcal{N}%
_{0}\rtimes\operatorname*{Sym}(3)$ of $\mathcal{G}_{\mathcal{S}}$
\label{SSSec Subgroup G^0}}

The group $\mathcal{G}_{\mathcal{S}}=\mathcal{N}\rtimes\operatorname*{Sym}(3)$
contains an obvious subgroup of index 2, namely $\mathcal{N}\rtimes
\operatorname{Alt}(3).$ Of relevance to our later concerns is the fact that
$\mathcal{G}_{\mathcal{S}}$ also contains a much less obvious subgroup
$\mathcal{G}_{\mathcal{S}}^{0}$ of index 2, which arises from the special
nature of the group $\operatorname*{GL}(V_{2})$ when the vector space $V_{2}$
is over the field $\operatorname*{GF}(2).$ For then $\operatorname*{GL}%
(V_{2})$ contains a subgroup $\mathcal{H}\cong Z_{3}$ of index 2. It follows
that the group $\mathcal{N}:=\operatorname*{GL}(V_{2})\times\operatorname*{GL}%
(V_{2})\times\operatorname*{GL}(V_{2})$ contains a subgroup $\mathcal{N}_{0}$
of index 2 consisting of those elements $a_{1}\otimes a_{2}\otimes a_{3}%
\in\mathcal{N}$ such that an even number, 0 or 2, of the $a_{i}$ belong to the
coset $\mathcal{K}:=\operatorname*{GL}(V_{2})\setminus\mathcal{H}$ (which
incidentally consists of three involutions). Consequently the subgroup
\begin{equation}
\mathcal{G}_{\mathcal{S}}^{0}:=\mathcal{N}_{0}\rtimes\operatorname*{Sym}(3)
\label{SubgroupG^0}%
\end{equation}
has index 2 in $\mathcal{G}_{\mathcal{S}}$. Observe that the three involutions%
\begin{equation}
J_{x}J_{y}:(13)(24)(57)(68),\quad J_{x}J_{z}:(15)(26)(37)(48),\quad J_{y}%
J_{z}:(17)(28)(35)(46) \label{three involutions JxJy etc}%
\end{equation}
are consequently elements of $\mathcal{G}_{\mathcal{S}}^{0},$ while
$J=J_{x}J_{y}J_{z}\in\mathcal{G}_{\mathcal{S}}\setminus\mathcal{G}%
_{\mathcal{S}}^{0}.$

\begin{remark}
If instead we deal, for general $m>1,$ with $V_{2^{m}}:=\otimes^{m}V_{2},$ and
with $\mathcal{S}=\mathcal{S}_{m}(2):=\mathcal{S}_{1,1,\ldots,1}%
(2)\subset\operatorname*{PG}(2^{m}-1,2),$ then we have $\mathcal{G}%
_{\mathcal{S}}=\mathcal{N}\rtimes\operatorname*{Sym}(m)$ where $\mathcal{N}%
:=\times^{m}\operatorname*{GL}(V_{2})$ has a similarly defined subgroup
$\mathcal{N}_{0}$ giving rise to a subgroup $\mathcal{G}_{\mathcal{S}}%
^{0}:=\mathcal{N}_{0}\rtimes\operatorname*{Sym}(m)$ of $\mathcal{G}%
_{\mathcal{S}}$ of index 2. In the rather special case $m=2$ the nine points
of $\mathcal{S}:=\mathcal{S}_{1,1}(2)$ are the points of a hyperbolic quadric
$\mathcal{H}_{3}$ in $\operatorname*{PG}(3,2)$ and $\mathcal{G}_{\mathcal{S}%
}^{0}$ is seen to consist of those orthogonal transformations which fix
separately each of the two external lines of the quadric.
\end{remark}

\subsubsection{Generators for the groups $\mathcal{G}_{\mathcal{S}%
},~\mathcal{G}_{\mathcal{S}}^{0},~\mathcal{G}_{\mathcal{B}}$
\label{SSSec Generators for groups ...}}

It is possible to generate the group $\mathcal{G}_{\mathcal{S}}$ using just
two elements $M,N:$%
\begin{equation}
\mathcal{G}_{\mathcal{S}}=\langle M,N\rangle. \label{<M,N>}%
\end{equation}
One choice of generators uses the elements $M,\ N,$ each of order 6, defined
by:%
\begin{equation}
M=J_{x}B,\quad\quad N=A_{x}K_{12}. \label{M =  , N =}%
\end{equation}
This choice was checked by use of the computer algebra system Magma, see
\cite{MAGMA}. Magma was also used to check the following related choices of
generators for the groups $\mathcal{G}_{\mathcal{S}}^{0}$ and $\mathcal{G}%
_{\mathcal{B}}$:%
\begin{align}
\mathcal{G}_{\mathcal{S}}^{0}  &  =\langle M^{\prime},N\rangle,\quad
\text{where }M^{\prime}=JM=JJ_{x}B\quad\text{and }N=A_{x}K_{12},\label{M'N}\\
\mathcal{G}_{\mathcal{B}}  &  =\langle M,K_{12}\rangle,\quad\text{where
}M=J_{x}B. \label{MJ12}%
\end{align}

\section{The distinguished $Z_{3}$-subgroup $\mathcal{Z}<\operatorname*{GL}%
(8,2)$ \label{Sec Distinguished subgroup Z}}

An intriguing aspect of the subgroup $\mathcal{G}_{\mathcal{S}}^{0}%
<\operatorname*{GL}(8,2),$ see (\ref{SubgroupG^0}), \emph{is that its
centralizer }$\mathcal{Z}:=C_{\operatorname*{GL}(8,2)}(\mathcal{G}%
_{\mathcal{S}}^{0})$\emph{ in }$\operatorname*{GL}(8,2)$\emph{ is non-trivial.
}To see this, suppose $W\in\mathcal{Z},$ so that
\begin{equation}
WA=AW\quad\text{for all }A\in\mathcal{G}_{\mathcal{S}}^{0}. \label{AW = WA}%
\end{equation}
For each $A\in\mathcal{G}_{\mathcal{S}}^{0}$ it follows from this that $W$
leaves invariant the subspace
\begin{equation}
\operatorname{Fix}(A)=\{x\in V_{8}|~Ax=x\} \label{Fix(A)}%
\end{equation}
of $V_{8}$ consisting of all vectors fixed by $A.$ Observe that for the first
two of the involutions (\ref{three involutions JxJy etc}) we have
\begin{equation}
\operatorname{Fix}(J_{x}J_{y})=\langle13,24,57,68\rangle,\quad
\operatorname{Fix}(J_{x}J_{z})=\langle15,26,37,48\rangle.
\label{three involutions Fix}%
\end{equation}
(Here, and elsewhere, we use $i,ij,ijk,\ldots$ as shorthand for $e_{i}%
,e_{i}+e_{j},e_{i}+e_{j}+e_{k},\ldots\;.)$ Noting that $\operatorname{Fix}%
(J_{x}J_{y})\cap\operatorname{Fix}(J_{x}J_{z})=\langle1357,2468\rangle$ it
follows that if $W\in\mathcal{Z}$ then $W$ necessarily stabilizes the
distinguished tangent $L(u)=\{u,1357,2468\}$, see (\ref{L(p)}), where
$u:=12345678$ denotes the unit point of the basis $\mathcal{B}.$ (Using the
third involution $J_{y}J_{z}$ in (\ref{three involutions JxJy etc}) gives
nothing further, since $\operatorname{Fix}(J_{y}J_{z})=\langle
17,28,35,46\rangle$ meets each of the 4-spaces (\ref{three involutions Fix})
also in the 2-space $\langle1357,2468\rangle.)$

Now $\mathcal{G}_{\mathcal{S}}^{0}$ acts transitively on the 27 points of
$\mathcal{S},$ and so it acts transitively on the 27 distinguished tangents
$\{L(p),p\in\mathcal{S}\}.$ Consequently if $W\in\mathcal{Z}$ then $W$
stabilizes each distinguished tangent. Moreover if $W\in\mathcal{Z}$ fixes a
point on one of the distinguished tangents, then it fixes a point on each of
the distinguished tangents, which can be the case only if $W=I.$ \emph{So if
an element }$W\neq I$ \emph{exists in }$\mathcal{Z}$\emph{ then }$W$ \emph{is
of order 3 and cyclically permutes the three points of each distinguished
tangent.}

Consider the eight distinguished tangents $L_{i}:=L(e_{i})$ through the basis
vectors (\ref{Basis e_i}):%
\begin{align}
L_{1}  &  =\{1,246,1246\},\qquad L_{8}=\{8,8357,357\},\nonumber\\
L_{3}  &  =\{3,248,3248\},\qquad L_{6}=\{6,6157,157\},\nonumber\\
L_{5}  &  =\{5,268,5268\},\qquad L_{4}=\{4,4137,137\},\nonumber\\
L_{7}  &  =\{7,468,7468\},\qquad L_{2}=\{2,2135,135\}.
\label{8 distinguished tangents Li}%
\end{align}
Suppose $W$ is that element of $\operatorname*{GL}(8,2)$ which fixes each of
these eight lines by cycling through their points in the order displayed in
(\ref{8 distinguished tangents Li}); in particular $W^{3}=I$. So $W$ is given
by its effect on the basis $\mathcal{B}$ by%
\begin{align}
W:  &  1\mapsto246,\quad2\mapsto2135,\quad3\mapsto248,\quad4\mapsto
4137,\nonumber\\
&  5\mapsto268,\quad6\mapsto6157,\quad7\mapsto468,\quad8\mapsto8357. \label{W}%
\end{align}

\begin{theorem}
\label{Thm Z=<W>}The centralizer in $\operatorname*{GL}(8,2)$ of the subgroup
$\mathcal{G}_{\mathcal{S}}^{0}$ is the subgroup $\mathcal{Z}:=\langle
W\rangle\cong Z_{3},$ where $W,$ defined as in (\ref{W}), fixes each line of
the invariant set $\{L(p),p\in\mathcal{S}\}$. Moreover the orbits of
$\mathcal{Z}$ in $\operatorname*{PG}(7,2)$ constitute a $\mathcal{G}%
_{\mathcal{S}}$-invariant spread $\mathcal{L}_{85}$ of $85$ lines.\emph{ }
\end{theorem}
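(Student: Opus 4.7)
The plan is to establish the theorem in two stages: first that the centralizer $\mathcal{Z}$ equals $\langle W\rangle\cong Z_{3}$, and then that its orbits form a $\mathcal{G}_{\mathcal{S}}$-invariant line-spread. For existence I would verify directly that the $W$ defined by (\ref{W}) centralizes $\mathcal{G}_{\mathcal{S}}^{0}$. Since by (\ref{M'N}) the group $\mathcal{G}_{\mathcal{S}}^{0}$ is generated by $M'=JJ_{x}B$ and $N=A_{x}K_{12}$, it suffices to check the two identities $WM'=M'W$ and $WN=NW$, which reduce to a pair of $8\times 8$ matrix computations over $\operatorname*{GF}(2)$ (most conveniently carried out in Magma, as with the generating set itself). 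The relations $W^{3}=I$ and $W\neq I$ are visible from (\ref{W}), so $\langle W\rangle\cong Z_{3}\subseteq\mathcal{Z}$.

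For uniqueness, the discussion preceding the theorem already shows that any $V\in\mathcal{Z}\setminus\{I\}$ has order $3$ and cyclically permutes the three points of every distinguished tangent $L(p)$. Given two such elements $V$ and $W$, I would argue by dichotomy. If they happen to induce the same cyclic order on even a single tangent $L(p_{0})$, then $VW^{-1}$ fixes all three points of $L(p_{0})$; the transitivity argument already used in the excerpt then forces $VW^{-1}=I$, i.e.\ $V=W$. Otherwise, $V$ and $W$ must induce opposite cyclic orders on every tangent (there being only two cyclic orders available on a $3$-set), so $VW$ acts as the identity on every point of every $L(p)$; the same argument then gives $VW=I$ and $V=W^{-1}$. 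Either way $V\in\langle W\rangle$, so $\mathcal{Z}=\langle W\rangle$.

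For the spread statement, I would show that the minimal polynomial of $W$ is $X^{2}+X+1$, equivalently that $I+W+W^{2}=0$. This is immediate from the defining property of $W$: on each basis tangent $L_{i}=\{e_{i},W(e_{i}),W^{2}(e_{i})\}$ the three points sum to zero in $V_{8}$ (they form a projective line over $\operatorname*{GF}(2)$), so $e_{i}+W(e_{i})+W^{2}(e_{i})=0$ for every basis vector $e_{i}$, and linearity extends the identity to all of $V_{8}$. Hence $W-I$ is invertible, every $\langle W\rangle$-orbit on nonzero vectors has size exactly $3$, and each orbit $\{v,Wv,W^{2}v\}=\{v,Wv,v+Wv\}$ is a projective line in $\operatorname*{PG}(7,2)$. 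This partitions the $255$ points of $\operatorname*{PG}(7,2)$ into $255/3=85$ pairwise disjoint lines, i.e.\ a line-spread $\mathcal{L}_{85}$. Finally, because $\mathcal{G}_{\mathcal{S}}^{0}$ has index $2$ and is therefore normal in $\mathcal{G}_{\mathcal{S}}$, every $A\in\mathcal{G}_{\mathcal{S}}$ satisfies $A\mathcal{Z}A^{-1}=C_{\operatorname*{GL}(8,2)}(A\mathcal{G}_{\mathcal{S}}^{0}A^{-1})=\mathcal{Z}$, so $A$ permutes the $\mathcal{Z}$-orbits and $\mathcal{L}_{85}$ is $\mathcal{G}_{\mathcal{S}}$-invariant.

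The main obstacle is the first step: verifying that the particular $W$ specified in (\ref{W}) really does commute with the generators of $\mathcal{G}_{\mathcal{S}}^{0}$. This is the only genuinely computational part of the argument, and it is precisely where the somewhat mysterious choice of cyclic direction on each of the eight basis tangents $L_{i}$ has to come out globally consistent. Once that finite check is in hand, the uniqueness reduction, the orbit count, and the $\mathcal{G}_{\mathcal{S}}$-invariance are all structural and involve no case analysis.
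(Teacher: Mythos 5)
Your proposal is correct and follows the same overall architecture as the paper's proof: existence by checking that $W$ commutes with the two generators $M'=JJ_{x}B$ and $N=A_{x}K_{12}$ of $\mathcal{G}_{\mathcal{S}}^{0}$, uniqueness via a dichotomy on how a putative second element acts on the distinguished tangents, and the spread via the minimal polynomial $t^{2}+t+1$ (argued, exactly as in the paper, from the fact that the three points of each line $L_{i}$ sum to zero). The one step where you take a genuinely different route is uniqueness. The paper compares $W'$ with $W$ on the eight tangents $L_{1},\dots,L_{8}$, invokes the geometric fact that any four of these eight tangents generate $\operatorname{PG}(7,2)$, and applies a pigeonhole count (agree with $W$ on at least four, or with $W^{2}$ on at least five) to force $W'\in\{W,W^{2}\}$. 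You instead exploit that $\mathcal{Z}$ is a group: if $V$ and $W$ induce the same cyclic order on even one tangent then $VW^{-1}\in\mathcal{Z}$ fixes a point of that tangent, and if they disagree everywhere then $VW\in\mathcal{Z}$ does; either way the previously established fact that a nonidentity element of $\mathcal{Z}$ fixes no point of any distinguished tangent finishes the argument. Your version is cleaner in that it needs no spanning claim about subsets of tangents and no counting; the paper's version is more self-contained at that point since it does not reuse the fixed-point-free observation. Similarly, for $\mathcal{G}_{\mathcal{S}}$-invariance you use the structural identity $A\mathcal{Z}A^{-1}=C_{\operatorname*{GL}(8,2)}(A\mathcal{G}_{\mathcal{S}}^{0}A^{-1})=\mathcal{Z}$ coming from normality of the index-two subgroup $\mathcal{G}_{\mathcal{S}}^{0}$, whereas the paper records the explicit relation $JWJ^{-1}=W^{2}$ in a follow-up remark; both are valid, and yours generalizes without computation.
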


\begin{proof}
The fact that $W$ centralizes the subgroup $\mathcal{G}_{\mathcal{S}}^{0}$
follows upon checking that $W$ commutes with each of the generators in
(\ref{M'N}). The fact that $\mathcal{Z}$ fixes (even just one of) the eight
distinguished tangents (\ref{8 distinguished tangents Li}) now ensures that
$\mathcal{Z}$ fixes every one of the invariant set $\{L(p),p\in\mathcal{S}\}.$

To see that $C_{\operatorname*{GL}(8,2)}(\mathcal{G}_{\mathcal{S}}^{0})$ is no
larger than $\mathcal{Z}$ suppose that $\mathcal{Z}^{\prime}:=\langle
W^{\prime}\rangle$ is another subgroup $\cong Z_{3}$ which fixes the $27$
distinguished tangents. Consider the cyclic action of $W$ and $W^{\prime}$ on
the points of the eight distinguished tangents
(\ref{8 distinguished tangents Li}), and observe that any four of these
tangents generate $\operatorname*{PG}(7,2)$. So if there exists a subset of
four of the tangents (\ref{8 distinguished tangents Li}) upon which the action
of $W^{\prime}$ agrees with that of $W$ then $W^{\prime}=W.$ But if no such
subset exists then there exists a subset of more than four the tangents
(\ref{8 distinguished tangents Li}) upon which the action of $W^{\prime}$
agrees with that of $W^{2},$ whence $W^{\prime}=W^{2}.$ So in either case it
follows that $\mathcal{Z}^{\prime}=\mathcal{Z}.$

The minimal polynomial of $W$ is $\mu=t^{2}+t+1,$ since $\mu(W)e_{i}=0$ for
each basis vector $e_{i}\mathfrak{.}$ Consequently $W$ is fixed-point-free on
$\operatorname*{PG}(7,2)$ and so the orbits of $\mathcal{Z}$ in
$\operatorname*{PG}(7,2)$ constitute a $\mathcal{G}_{\mathcal{S}}$-invariant
spread $\mathcal{L}_{85}$ of $85$ lines.
\end{proof}

\begin{remark}
Upon observing that $JWJ^{-1}=W^{2}$ holds for the element $J\in
\mathcal{G}_{\mathcal{S}}\setminus\mathcal{G}_{\mathcal{S}}^{0}$ it follows
that
\begin{equation}
A\mathcal{Z}A^{-1}=\mathcal{Z},\mathcal{\quad}\text{for all }A\in
\mathcal{G}_{\mathcal{S}}. \label{normalizes Z}%
\end{equation}

\end{remark}

\section{Orbits and triplets \label{Sec Orbits and triplets}}

\subsection{The five $\mathcal{G}_{S}$-orbits $\mathcal{O}_{1},\mathcal{O}%
_{2},\mathcal{O}_{3},\mathcal{O}_{4},\mathcal{O}_{5}$ of points
\label{SSec point-orbits}}

Although we will arrive at the five orbits in the order $\mathcal{O}%
_{5},\mathcal{O}_{2},\mathcal{O}_{4},\mathcal{O}_{3},\mathcal{O}_{1}$
nevertheless we have adopted the present numbering so as to be in agreement
with that in \cite[Proposition 5]{HavlicekOdehnalSaniga}.

From the structure (\ref{G_S = N sdp Sym(3)}) of $\mathcal{G}_{\mathcal{S}}$
clearly $\mathcal{O}_{5}:=\mathcal{S}_{1,1,1}(2)$ is a single $\mathcal{G}%
_{\mathcal{S}}$-orbit of length 27. Secondly each of the 9 ambient spaces
$Y_{i}^{r}$ contains 6 points external to $\mathcal{S},$ and such points form
an orbit $\mathcal{O}_{2}$ of length $9\times6=54.$ Thirdly each of the 27
$3$-flats $Z_{ijk}$ contains 2 points, on the distinguished tangent
$L(E_{ijk}),$ which are external to $\mathcal{O}_{2}\cup\mathcal{O}_{5},$
giving rise to an orbit $\mathcal{O}_{4}$ of length $27\times2=54.$ Consider
next the lines through a point $x\in\mathcal{S}$ which meet $\mathcal{S}$ in
one or more further points. Three of these lines are generators, whose union
accounts for $7$ points of $\mathcal{S}.$ Now each of the three $\mathcal{S}%
_{1,1}(2)$'s which contain $x$ contribute $4$ bisecants to $\mathcal{S}$
through $x,$ accounting for a further $3\times4=12$ points of $\mathcal{S}.$
The remaining $8$ points of $\mathcal{S}$ thus give rise to $8$ other
bisecants through $x$, and hence to $8$ points external to $\mathcal{S}.$ All
together there are $\frac{1}{2}(27\times8)=108$ `other bisecants', and the
external points on these bisecants form an orbit $\mathcal{O}_{3}$ of length
$108$. As we now show, the remaining $12(=255-27-54-54-108)$ points of
$\operatorname*{PG}(7,2)$ constitute a fifth $\mathcal{G}_{\mathcal{S}}$-orbit
$\mathcal{O}_{1}.$

If $\mathcal{L}_{s}\subset\mathcal{L}_{85}$ is a partial spread formed from
$s$ lines of the complete spread $\mathcal{L}_{85}$ in Theorem \ref{Thm Z=<W>}
then we will denote by $\mathcal{P(L}_{s})$ the set of $3s$ points underlying
the lines of $\mathcal{L}_{s}.$ If $\mathcal{L}_{27}\subset\mathcal{L}_{85}$
denotes the partial spread consisting of the $27$ distinguished tangents then
$\mathcal{P(L}_{27})=\mathcal{O}_{4}\cup\mathcal{O}_{5}.$ Next note from
(\ref{W}) that $W$ maps the point $e_{1}+e_{3}\in\mathcal{O}_{2}$ to the point
$e_{8}+e_{6}\in\mathcal{O}_{2}.$ Hence, in view of theorem \ref{Thm Z=<W>},
$W$ sends every point of $\mathcal{O}_{2}$ to another point of $\mathcal{O}%
_{2},$ So $\mathcal{O}_{2}=\mathcal{P(L}_{18})$ for a partial spread
$\mathcal{L}_{18}\subset\mathcal{L}_{85}.$ Noting also that $W$ maps the point
$e_{1}+e_{8}\in\mathcal{O}_{3}$ to the point $e_{1}+u\in\mathcal{O}_{3},$ it
similarly follows that $\mathcal{O}_{3}=\mathcal{P(L}_{36})$ for a partial
spread $\mathcal{L}_{36}\subset\mathcal{L}_{85}.$ (Alternatively, $W$ mapping
a subset of $\mathcal{O}_{3}$ into $\mathcal{O}_{1}$ would contradict
$\mathcal{O}_{3}$ being a single $\mathcal{G}_{\mathcal{S}}$-orbit.)
Consequently the $12$-set $\mathcal{O}_{1}$ must be of the form $\mathcal{P(L}%
_{4})$ for a partial spread $\mathcal{L}_{4}\subset\mathcal{L}_{85}.$ Such a
$12$-set $\mathcal{O}_{1}$ possesses $(12\times9)/2=54$ bisecants, and so,
granted the lengths $27,54,54,108$ of the preceding orbits, $\mathcal{O}_{1}$
must be a single $\mathcal{G}_{\mathcal{S}}$-orbit. Explicitly we have
$\mathcal{L}_{4}=\{L_{a},L_{b},L_{c},L_{d}\}$ where
\begin{align}
L_{a} &  =\{18u,357u,246u\},\quad L_{b}=\{27u,135u,468u\},\nonumber\\
L_{c} &  =\{36u,157u,248u\},\quad L_{d}=\{45u,137u,268u\},\label{La ... Ld}%
\end{align}
and where $iju$ and $ijku$ are shorthand for $e_{i}+e_{j}+u$ and $e_{i}%
+e_{j}+e_{k}+u.$ So the points external to $\mathcal{O}_{1}$ on the $54$
bisecants are seen to form the orbit $\mathcal{O}_{2}.$ As an instance of the
action of $\mathcal{G}_{S}$ on $\mathcal{O}_{1},$ observe that $C$ in
(\ref{JxJ12}) effects the cyclic permutation $(L_{a}L_{b}L_{c}L_{d}).$ The
next theorem summarizes the foregoing results.

\begin{theorem}
\label{Thm Orbits}Under the action of $\mathcal{G}_{S}$ the points of
$\operatorname*{PG}(7,2)$ fall into five orbits $\mathcal{O}_{5}%
,\mathcal{O}_{2},\mathcal{O}_{4},\mathcal{O}_{3},\mathcal{O}_{1},$ of
respective lengths $27,54,54,108,12,$ and the lines of the invariant spread
$\mathcal{L}_{85}$ fall into four orbits $\mathcal{L}_{27},\mathcal{L}%
_{18},\mathcal{L}_{36},\mathcal{L}_{4}$, where%
\begin{equation}
\mathcal{P(L}_{4})=\mathcal{O}_{1},\quad\mathcal{P(L}_{18})=\mathcal{O}%
_{2},\quad\mathcal{P(L}_{36})=\mathcal{O}_{3},\quad\mathcal{P(L}%
_{27})=\mathcal{O}_{4}\cup\mathcal{O}_{5}. \label{line and point orbits}%
\end{equation}

\end{theorem}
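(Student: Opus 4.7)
The plan is to establish the theorem in three stages: identify the four geometrically natural orbits $\mathcal{O}_5,\mathcal{O}_2,\mathcal{O}_4,\mathcal{O}_3$ via a stratification of $\operatorname{PG}(7,2)$ by incidence with $\mathcal{S}$, then show the residual $12$ points form a single orbit $\mathcal{O}_1$, and finally use the invariance of $\mathcal{L}_{85}$ to promote the point decomposition to a line decomposition.

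For the first stage, $\mathcal{O}_5=\mathcal{S}$ is an orbit of length $27$ because $\mathcal{N}$ already acts transitively on the $E_{ijk}$ by tensor products of elements of $\operatorname{GL}(V_2)$. The nine ambient $3$-flats $Y_i^r$ form a single $\mathcal{G}_{\mathcal{S}}$-orbit (via $\operatorname{Sym}(3)$ permuting the three directions and $\mathcal{N}$ the three sections in each direction), so the $Y_i^r\setminus\sigma_i^r$ give $\mathcal{O}_2$ of length $9\times 6=54$ once one checks that these sets are pairwise disjoint. The $27$ distinguished tangents $L(E_{ijk})$ form an orbit by transitivity on $\mathcal{S}$, and the two extra points of each, being external to the three planes spanned by generator pairs through $E_{ijk}$, lie outside $\mathcal{O}_2\cup\mathcal{O}_5$, giving $\mathcal{O}_4$ of length $27\times 2=54$. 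For $\mathcal{O}_3$ I would carry out the bisecant count sketched just before the theorem: through each $x\in\mathcal{S}$ the $3$ generators plus the $3\times 4=12$ bisecants inside the three $\mathcal{S}_{1,1}(2)$'s through $x$ exhaust $6+12=18$ of the remaining $26$ points of $\mathcal{S}$, leaving $8$ other bisecants per point of $\mathcal{S}$ and hence $\tfrac12(27\times 8)=108$ such bisecants, each contributing one point to $\mathcal{O}_3$.

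For the second stage, the residue has $255-27-54-54-108=12$ points. Here I would invoke Theorem~\ref{Thm Z=<W>} together with (\ref{normalizes Z}): since $\mathcal{Z}\triangleleft\mathcal{G}_{\mathcal{S}}$ and $W$ is fixed-point-free, every $\mathcal{G}_{\mathcal{S}}$-orbit disjoint from $\mathcal{S}$ is a union of spread lines of $\mathcal{L}_{85}$. In particular the $12$-point residue is of the form $\mathcal{P}(\mathcal{L}_4)$ for a partial spread of four lines. To identify the four lines and to prove transitivity I would exhibit them explicitly as the $\{L_a,L_b,L_c,L_d\}$ of (\ref{La ... Ld}) and verify directly that the element $C$ of (\ref{JxJ12}) cyclically permutes them; combined with the transitive action of $\mathcal{Z}$ on the points of each line, this yields $\mathcal{G}_{\mathcal{S}}$-transitivity on the whole of $\mathcal{O}_1$.

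For the third stage, I would confirm that each of $\mathcal{O}_2,\mathcal{O}_3,\mathcal{O}_4\cup\mathcal{O}_5$ is preserved by $W$ (spot-checking one point per orbit, as in the paper's observation $W(e_1+e_3)=e_6+e_8\in\mathcal{O}_2$ and $W(e_1+e_8)=e_1+u\in\mathcal{O}_3$), which forces each orbit to be partitioned by spread lines in the stated numbers $27,18,36,4$; transitivity of $\mathcal{G}_{\mathcal{S}}$ on the underlying points together with its preservation of $\mathcal{L}_{85}$ then gives transitivity on each set of lines. I expect the main obstacle to be the transitivity claim for $\mathcal{O}_1$, since pure counting allows partitions like $6{+}6$ or $3{+}3{+}3{+}3$; ruling these out requires either the explicit action of $C$ on $\{L_a,L_b,L_c,L_d\}$ or an appeal to computer algebra as was done for the generator statements (\ref{<M,N>})--(\ref{MJ12}).
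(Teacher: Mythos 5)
Your overall architecture mirrors the paper's: the same stratification producing $\mathcal{O}_{5},\mathcal{O}_{2},\mathcal{O}_{4},\mathcal{O}_{3}$, the same use of $W$ and the spread $\mathcal{L}_{85}$ to organise the residual $12$ points, and the same spot-checks $W(e_{1}+e_{3})\in\mathcal{O}_{2}$ and $W(e_{1}+e_{8})\in\mathcal{O}_{3}$. However, two of your supporting claims about $\mathcal{Z}$ are wrong, and one of them leaves a genuine gap. First, it is not true that ``every $\mathcal{G}_{\mathcal{S}}$-orbit disjoint from $\mathcal{S}$ is a union of spread lines'': $\mathcal{O}_{4}$ is disjoint from $\mathcal{S}$, yet every point of $\mathcal{O}_{4}$ lies on a line of $\mathcal{L}_{27}$, which meets $\mathcal{O}_{4}$ in only two of its three points. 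The fact that $\mathcal{G}_{\mathcal{S}}$ normalizes $\mathcal{Z}$ only says that $\mathcal{G}_{\mathcal{S}}$ permutes spread lines; whether a given invariant point-set is a union of spread lines must be checked, which is exactly what your third-stage spot-checks (propagated over whole orbits because $W$ centralizes the transitive group $\mathcal{G}_{\mathcal{S}}^{0}$) accomplish. So this first slip is repairable from material you already have.

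The second issue is not. To obtain $\mathcal{G}_{\mathcal{S}}$-transitivity on the $12$-set you combine the $4$-cycle $(L_{a}L_{b}L_{c}L_{d})$ effected by $C$ with ``the transitive action of $\mathcal{Z}$ on the points of each line.'' But $\mathcal{Z}=\langle W\rangle$ is \emph{not} a subgroup of $\mathcal{G}_{\mathcal{S}}$ --- indeed $W(\mathcal{S})=\mathcal{S}^{\prime}\neq\mathcal{S}$ --- so its elements cannot be used to move points within a $\mathcal{G}_{\mathcal{S}}$-orbit. Transitivity of $\mathcal{G}_{\mathcal{S}}$ on the four lines still leaves open, for instance, the possibility that $\mathcal{O}_{1}$ splits into three orbits of size $4$, each containing one point from every line. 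To close the gap you must either exhibit an element of $\mathcal{G}_{\mathcal{S}}$ that stabilizes some $L_{h}$ while permuting its three points transitively, or argue by counting as the paper does: the $(12\times 9)/2=54$ bisecants of the $12$-set carry an invariant set of at most $54$ external points, and comparison with the already-established orbit lengths $27,54,54,108$ then forces $\mathcal{O}_{1}$ to be a single orbit. Your instinct that this transitivity claim is ``the main obstacle'' was correct; the proposed fix does not yet overcome it.
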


\begin{remark}
\label{Rmk GF(4) excursion}The results in the theorem agree with those of
Havlicek, Odehnal and Saniga, see \cite[Propositions 4, 5]%
{HavlicekOdehnalSaniga}. These authors arrived at their results after an
excursion into $\operatorname*{GF}(4)$ terrain, viewing the three points
$\{u_{0},u_{1},u_{2}\}$ of the projective line $\mathbb{P}V(2,2)$ as the
`real' points of the projective line $\mathbb{P}V(2,4).$ They obtained thereby
a $\mathcal{G}_{\mathcal{S}}$\emph{-}invariant basis for $\operatorname*{PG}%
(7,4)$ consisting of four pairs of `complex conjugate' points, these yielding
the four real lines $\mathcal{L}_{4},$ and hence the orbit $\mathcal{O}_{1}.$
\end{remark}

\subsection{The triplet $\{\mathcal{S},\mathcal{S}^{\prime},\mathcal{S}%
^{\prime\prime}\}$ of Segre varieties $\mathcal{S}_{1,1,1}(2)$
\label{SSec Triplets S,S',S''}}

Each distinguished tangent $L(p),p\in\mathcal{S}:=\mathcal{S}_{1,1,1}%
(2)=\mathcal{O}_{5},$ contains two points of the orbit $\mathcal{O}_{4},$
namely $p^{\prime}=Wp$ and $p^{\prime\prime}=W^{2}p.$ Consequently we have
\begin{equation}
\mathcal{O}_{4}=\mathcal{S}^{\prime}\cup\mathcal{S}^{\prime\prime}%
,\quad\mathcal{S}^{\prime}=W(\mathcal{S}),\quad\mathcal{S}^{\prime\prime
}=W^{2}(\mathcal{S}), \label{theta3 splits}%
\end{equation}
and so \emph{the }$\mathcal{G}_{\mathcal{S}}$\emph{-orbit }$\mathcal{O}_{4}%
$\emph{ splits into two copies of the Segre variety }$\mathcal{S}.$ It is
quite a surprise to find that the study of a single Segre variety
$\mathcal{S}_{1,1,1}(2)$ inevitably leads one to deal with a triplet of such
varieties which share the same $27$ distinguished tangents and hence give rise
to the same distinguished $Z_{3}$-subgroup $\mathcal{Z}<\operatorname*{GL}%
(8,2)\ $studied in section \ref{Sec Distinguished subgroup Z}. Moreover%
\begin{equation}
\mathcal{G}_{\mathcal{S}}^{0}=\mathcal{G}_{\mathcal{S}^{\prime}}%
^{0}=\mathcal{G}_{\mathcal{S}^{\prime\prime}}^{0}=\mathcal{N}_{0}%
\rtimes\operatorname*{Sym}(3), \label{S,S'S'' share G^0}%
\end{equation}
the group $\mathcal{G}_{\mathcal{S}}^{0}$ having the six point-orbits
$\mathcal{O}_{1},\mathcal{O}_{2},\mathcal{O}_{3},\mathcal{S}^{\prime
},\mathcal{S}^{\prime\prime},\mathcal{S}.$ Further $\mathcal{G}_{\mathcal{S}}$
effects the interchange $\mathcal{S}^{\prime}\rightleftarrows\mathcal{S}%
^{\prime\prime},$ $\mathcal{G}_{\mathcal{S}^{\prime}}$ effects the interchange
$\mathcal{S}\rightleftarrows\mathcal{S}^{\prime\prime}$ and $\mathcal{G}%
_{\mathcal{S}^{\prime\prime}}$ effects the interchange $\mathcal{S}%
\rightleftarrows\mathcal{S}^{\prime}.$ In particular the involution
$J\in\mathcal{G}_{\mathcal{S}}$ effects $\mathcal{S}^{\prime}\rightleftarrows
\mathcal{S}^{\prime\prime},$ the involution $J^{\prime}:=WJW^{-1}=W^{2}%
J\in\mathcal{G}_{\mathcal{S}^{\prime}}$ effects $\mathcal{S}\rightleftarrows
\mathcal{S}^{\prime\prime}$ and the involution $J^{\prime\prime}:=W^{2}%
JW^{-2}=WJ\in\mathcal{G}_{\mathcal{S}^{\prime\prime}}$ effects $\mathcal{S}%
\rightleftarrows\mathcal{S}^{\prime}.$

\subsection{The $\mathcal{G}_{\mathcal{B}}$-orbits of points}

Under the action of the subgroup $\mathcal{G}_{\mathcal{B}}$ of $\mathcal{G}%
_{\mathcal{S}},$ the orbits $\mathcal{O}_{i}$ decompose as in Table
1\footnote{The entries in this table are in accord with those reported in
\cite[Table 5.1]{Glynn et al}}.%

\[%
\begin{tabular}
[c]{|c|c|c|l|l|l|}\hline
\multicolumn{6}{|c|}{Table 1}\\\hline
$\mathcal{G}_{\mathcal{S}}$-orbit & $\mathcal{G}_{\mathcal{B}}$-orbit & $w$ &
$|\mathcal{O}_{i,w}|$ & $p_{i,w}$ & mnemonic\\\hline
$\mathcal{O}_{1}$ & $\mathcal{O}_{1,5}$ & 5 & 8 & $135u$ & \\
& $\mathcal{O}_{1,6}$ & 6 & 4 & $18u$ & \\\hline
$\mathcal{O}_{2}$ & $\mathcal{O}_{2,2}$ & 2 & 12 & $13$ & face diagonals\\
& $\mathcal{O}_{2,3}$ & 3 & 24 & $123$ & 2-arcs\\
& $\mathcal{O}_{2,4}$ & 4 & 6 & $1278$ & opposite edges\\
& $\mathcal{O}_{2,6}$ & 6 & 12 & $12u$ & \\\hline
$\mathcal{O}_{3}$ & $\mathcal{O}_{3,2}$ & 2 & 4 & $18$ & SD (space diagonal)\\
& $\mathcal{O}_{3,3}$ & 3 & 24 & $128$ & SD + point\\
& $\mathcal{O}_{3,4}$ & 4 & 24 & $1238$ & SD + 3-arc\\
& $\mathcal{O}_{3,4^{\prime}}$ & 4 & 24 & $1248$ & SD + 2-arc\\
& $\mathcal{O}_{3,5}$ & 5 & 24 & $123u$ & \\
& $\mathcal{O}_{3,7}$ & 7 & 8 & $1u$ & \\\hline
$\mathcal{O}_{4}$ & $\mathcal{O}_{4,3}$ & 3 & 8 & $135$ & \textquotedblleft
tri-diagonal\textquotedblright\\
& $\mathcal{O}_{4,4}$ & 4 & 8 & $1246$ & \textquotedblleft
claw\textquotedblright\\
& $\mathcal{O}_{4,4^{\prime}}$ & 4 & 2 & $1357$ & tetrahedron\\
& $\mathcal{O}_{4,5}$ & 5 & 24 & $178u$ & \\
& $\mathcal{O}_{4,6}$ & 6 & 12 & $13u$ & \\\hline
$\mathcal{O}_{5}$ & $\mathcal{O}_{5,1}$ & 1 & 8 & $1$ & vertices\\
$=\mathcal{S}_{1,1,1}(2)$ & $\mathcal{O}_{5,2}$ & 2 & 12 & $12$ & edge
midpoints\\
& $\mathcal{O}_{5,4}$ & 4 & 6 & $1234$ & face centres\\
& $\mathcal{O}_{5,8}$ & 8 & 1 & $u$ & cube centre\\\hline
\end{tabular}
\ \ \ \ \ \ \ \
\]
This information will be of help when, in Section
\ref{Sec Invariant polynomials}, we consider $\mathcal{G}_{\mathcal{S}}%
$-invariant polynomials. In the table, $p_{i,w}$ denotes a representative
point on the $\mathcal{G}_{\mathcal{B}}$-orbit $\mathcal{O}_{i,w}$ which
consists of those points of $\mathcal{O}_{i}$ having weight $w$ with respect
to the basis $\mathcal{B}.$ (In the entries for $p_{i,w}$ we have used
shorthand notation, as after equation (\ref{La ... Ld}).) The `mnemonic'
entries in the final column refer to the `8-cube' of the vectors of the basis
$\mathcal{B}.$ Observe that the siblings of $\mathcal{S}=\mathcal{O}%
_{5}=\mathcal{O}_{5,1}\cup\mathcal{O}_{5,2}\cup\mathcal{O}_{5,4}%
\cup\mathcal{O}_{5,8}$ are
\begin{align}
\mathcal{S}^{\prime} &  =W(\mathcal{S})=\mathcal{O}_{4,3}^{\text{even}}%
\cup\mathcal{O}_{4,4}^{\text{odd}}\cup\mathcal{O}_{4,5}^{\text{odd}}%
\cup\mathcal{O}_{4,6}^{\text{even}}\cup\mathcal{O}_{4,4^{\prime}}^{\text{odd}%
}\nonumber\\
\mathcal{S}^{\prime\prime} &  =W^{2}(\mathcal{S})=\mathcal{O}_{4,3}%
^{\text{odd}}\cup\mathcal{O}_{4,4}^{\text{even}}\cup\mathcal{O}_{4,5}%
^{\text{even}}\cup\mathcal{O}_{4,6}^{\text{odd}}\cup\mathcal{O}_{4,4^{\prime}%
}^{\text{even}}.\label{W on S}%
\end{align}
Here a point $p\in\mathcal{O}_{4}$ is classed as `even' if its expression in
terms of the basis $\mathcal{B}$ uses more vectors $e_{i}$ with $i\in
\{2,4,6,8\}$ than with $i\in\{1,3,5,7\};$ otherwise it is classed as `odd'.
Thus if $p=e_{1}+e_{2}\in\mathcal{O}_{5,2}$ then $Wp\in\mathcal{O}%
_{4,5}^{\text{odd}}$ since $Wp=e_{1}+e_{3}+e_{4}+e_{5}+e_{6}.$

\section{Invariant polynomials \label{Sec Invariant polynomials}}

\subsection{The $\mathcal{G}_{\mathcal{B}}$-invariant polynomials of degree
$\leq4$ \label{SSec G_B InvtPoly deg <5}}

The permutational action of $\mathcal{G}_{\mathcal{B}}$ on the eight vectors
$e_{i}\in\mathcal{B}$ gives rise to a corresponding permutational action of
$\mathcal{G}_{\mathcal{B}}$ on the eight coordinates $x_{i}.$ Consequently, by
appeal to Table 1, the $\mathcal{G}_{\mathcal{B}}$-invariant polynomials
$P(x_{1},x_{2},\ldots,x_{8})$ of homogeneous degree $d\leq4$ are as follows.
For example the three $\mathcal{G}_{\mathcal{B}}$-invariant polynomials
$P_{3},P_{3}^{\prime},P_{3}^{\prime\prime}$ of degree 3 arise from the
coordinate orbits corresponding to the three orbits $\mathcal{O}%
_{2,3},\mathcal{O}_{4,3},\mathcal{O}_{3,3}\ $of points in Table 1 which have
weight 3. (\emph{Note: }when dealing with polynomials, we use
$i,ij,ijk,ijkl,\ldots$ as shorthand for $x_{i},x_{i}x_{j},x_{i}x_{j}%
x_{k},x_{i}x_{j}x_{k}x_{l},\ldots$ .)

\noindent(i) $d=1$
\begin{equation}
P_{1}:=%
%TCIMACRO{\tsum \nolimits_{i=1}^{8}}%
%BeginExpansion
{\textstyle\sum\nolimits_{i=1}^{8}}
%EndExpansion
x_{i}=1+2+3+4+5+6+7+8. \label{P_1}%
\end{equation}

\noindent(ii) $d=2$
\begin{align}
P_{2}  &  :=12+14+16+23+25+34+38+47+56+58+67+78\nonumber\\
P_{2}^{\prime}  &  :=13+15+17+35+37+57+24+26+28+46+48+68\nonumber\\
P_{2}^{\prime\prime}  &  :=18+27+36+45. \label{P_2}%
\end{align}

\noindent(iii) $d=3$ \quad(Here $\mathfrak{I}$ denotes the index set
$\{1,2,3,4,5,6,7,8\})$
\begin{align}
P_{3}:=  &  123+124+134+234+125+126+156+256\nonumber\\
&  +146+147+167+467+235+238+258+358\nonumber\\
&  +347+348+378+478+567+568+578+678\nonumber\\
P_{3}^{\prime}:=  &  135+137+157+357+246+248+268+468\nonumber\\
P_{3}^{\prime\prime}:=  &
%TCIMACRO{\tsum \nolimits_{i\in\mathfrak{I\setminus}\{1,8\}}}%
%BeginExpansion
{\textstyle\sum\nolimits_{i\in\mathfrak{I\setminus}\{1,8\}}}
%EndExpansion
18i+%
%TCIMACRO{\tsum \nolimits_{i\in\mathfrak{I\setminus}\{2,7\}}}%
%BeginExpansion
{\textstyle\sum\nolimits_{i\in\mathfrak{I\setminus}\{2,7\}}}
%EndExpansion
27i+%
%TCIMACRO{\tsum \nolimits_{i\in\mathfrak{I\setminus}\{3,6\}}}%
%BeginExpansion
{\textstyle\sum\nolimits_{i\in\mathfrak{I\setminus}\{3,6\}}}
%EndExpansion
36i+%
%TCIMACRO{\tsum \nolimits_{i\in\mathfrak{I\setminus}\{4,5\}}}%
%BeginExpansion
{\textstyle\sum\nolimits_{i\in\mathfrak{I\setminus}\{4,5\}}}
%EndExpansion
45i. \label{P_ 3}%
\end{align}

\noindent(iv) $d=4$%
\begin{align}
P_{4}=  &  1234+1256+1467+2358+3478+5678\nonumber\\
P_{4}^{\prime}=  &  1278+1368+1458+2367+2457+3456\nonumber\\
P_{4}^{\prime\prime}=  &  1246+1235+1347+1567+2348+2568+3578+4678\nonumber\\
P_{4}^{\prime\prime\prime}=  &  1357+2468\nonumber\\
P_{4}^{\text{iv}}=  &  1238+1258+1348+1478+1568+1678\nonumber\\
&  +18\text{ terms using the other 3 SDs }27,36,45\nonumber\\
P_{4}^{\text{v}}:=  &  1248+1268+1468+1358+1378+1578\nonumber\\
&  +1237+1257+2357+2467+2478+2678\nonumber\\
&  +2346+2368+3468+1356+1367+3567\nonumber\\
&  +1345+1457+3457+2456+2458+4568. \label{P_4}%
\end{align}

Consequently we have proved the following lemma.

\begin{lemma}
\label{Lem Genaral invt poly d<=4}The most general $\mathcal{G}_{\mathcal{B}}%
$-invariant polynomial of degree $\leq4$ is, for some choice of $c_{1}%
,c_{2},\ldots,c_{4}^{\text{\emph{v}}}\in\operatorname*{GF}(2),$ of the form%
\begin{align}
Q  &  =c_{1}P_{1}+c_{2}P_{2}+c_{2}^{\prime}P_{2}^{\prime}+c_{2}^{\prime\prime
}P_{2}^{\prime\prime}+c_{3}P_{3}+c_{3}^{\prime}P_{3}^{\prime}+c_{3}%
^{\prime\prime}P_{3}^{\prime\prime}\nonumber\\
&  +c_{4}P_{4}+c_{4}^{\prime}P_{4}^{\prime}+c_{4}^{\prime\prime}P_{4}%
^{\prime\prime}+c_{4}^{\prime\prime\prime}P_{4}^{\prime\prime\prime}%
+c_{4}^{\text{\emph{iv}}}P_{4}^{\text{\emph{iv}}}+c_{4}^{\text{\emph{v}}}%
P_{4}^{\text{\emph{v}}}. \label{most general Q_4}%
\end{align}

\end{lemma}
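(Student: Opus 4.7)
The plan is to reduce the problem to a combinatorial one: counting $\mathcal{G}_{\mathcal{B}}$-orbits on squarefree monomials in $x_1,\ldots,x_8$ of each degree, and matching each $P_\ast$ with one orbit sum. The key observation is that $\mathcal{G}_{\mathcal{B}}$, the stabilizer in $\mathcal{G}_{\mathcal{S}}$ of the unit point $u=\sum e_i$ and equivalently the symmetry group of the $8$-cube with vertices $\mathcal{B}$, acts on $V_8$ by permuting $\mathcal{B}$ (each generator $J_x, J_y, J_z, J, B, K_{12}$ displayed in Section \ref{SSec Subgroups of G_S} is visibly a basis permutation, and $\mathcal{G}_{\mathcal{B}}=\langle M,K_{12}\rangle$). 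The dual action on the coordinates $x_i$ is therefore also a permutation action. Since every polynomial function on $V_8$ over $\operatorname{GF}(2)$ has a unique squarefree representative, the homogeneous degree-$d$ component of $F(S_0)$ has $\operatorname{GF}(2)$-basis the $\binom{8}{d}$ monomials $x_{i_1}\cdots x_{i_d}$, which $\mathcal{G}_{\mathcal{B}}$ permutes among themselves.

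Consequently the homogeneous $\mathcal{G}_{\mathcal{B}}$-invariants of degree $d$ form a space whose basis is the set of orbit sums, one per $\mathcal{G}_{\mathcal{B}}$-orbit of degree-$d$ monomials. The correspondence $x_{i_1}\cdots x_{i_d}\longleftrightarrow e_{i_1}+\cdots+e_{i_d}$ is a $\mathcal{G}_{\mathcal{B}}$-equivariant bijection between degree-$d$ squarefree monomials and weight-$d$ points of $\operatorname{PG}(7,2)$ in the basis $\mathcal{B}$. Hence the monomial-orbit count in degree $d$ equals the weight-$d$ row count in Table 1, namely $1,3,3,6$ for $d=1,2,3,4$, giving a total invariant dimension of $1+3+3+6=13$, exactly the number of polynomials listed.

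It remains to verify that $P_1, P_2, P_2', P_2'', P_3, P_3', P_3'', P_4, P_4', P_4'', P_4''', P_4^{\text{iv}}, P_4^{\text{v}}$ are indeed the orbit sums corresponding to these $13$ orbits. A first cross-check via term counts and representatives is immediate: e.g.\ $P_2''=18+27+36+45$ has $4$ terms matching $|\mathcal{O}_{3,2}|=4$ with representative $18$; $P_4'''=1357+2468$ has $2$ terms matching $|\mathcal{O}_{4,4'}|=2$ with representative $1357$; $P_4=1234+\cdots$ has $6$ terms matching $|\mathcal{O}_{5,4}|=6$; and similarly for the remaining ten. The main -- routine but slightly tedious -- computational step, which is where the real work lies, is to confirm that each $P_\ast$ is closed under a chosen generating set of $\mathcal{G}_{\mathcal{B}}$, for instance $\{M,K_{12}\}$ from (\ref{MJ12}); this is a direct check on at most $24$ monomials per $P_\ast$. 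Once closure is established, distinct orbit sums are linearly independent (being supported on disjoint monomial sets) and therefore form a basis of the $\mathcal{G}_{\mathcal{B}}$-invariant polynomials of degree $\leq 4$ that vanish at the origin, yielding the parametrization (\ref{most general Q_4}).
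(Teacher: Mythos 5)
Your proposal is correct and follows essentially the same route as the paper: the paper likewise observes that $\mathcal{G}_{\mathcal{B}}$ acts by permuting the coordinates $x_i$, so that the homogeneous invariants of degree $d$ are spanned by monomial orbit sums, and reads off the orbits (one for $d=1$, three each for $d=2,3$, six for $d=4$) from the weight-$w$ rows of Table 1 exactly as you do. Your write-up merely makes explicit the equivariant bijection with weight-$d$ points and the closure check that the paper leaves implicit.
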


\subsection{The $\mathcal{G}_{\mathcal{S}}$-invariant polynomial functions of
degree $<8$ \label{SSec G_S InvtPoly deg <8}}

Recall from Section \ref{Sec Intro} that, in a given coordinate system,
point-sets $\psi$ in $\operatorname*{PG}(7,2)$ are in bijective correspondence
with polynomial functions $Q$ of the form $Q(x)=\sum x_{i_{1}}\cdots x_{i_{k}%
}$, $1\leq i_{1}<\cdots<i_{k}\leq8$. Those $Q$ which are invariant under the
coordinate transformations arising from the elements of $\mathcal{G}%
_{\mathcal{S}}$ thereby correspond to point-sets $\psi_{Q}$ which are fixed
under the action of $\mathcal{G}_{\mathcal{S}}.$ So if $Q$ is $\mathcal{G}%
_{\mathcal{S}}$-invariant the point-set $\psi_{Q}\subset\operatorname{PG}%
(7,2)$ with equation $Q(x)=0$ must be a union of some of the five
$\mathcal{G}_{\mathcal{S}}$-orbits $\mathcal{O}_{i}$. We now seek all
$\mathcal{G}_{\mathcal{S}}$-invariant polynomial functions $Q(x),x\in V_{8},$
having (reduced) degree $\deg Q<8.$ But, as noted in Section \ref{Sec Intro},
if $\deg Q<8$ then $|\psi_{Q}|$ must be odd, and since $\mathcal{O}_{5}$ is
the only orbit whose length is odd it follows that for some subset $\alpha$ of
$\{1,2,3,4\}$ we must have%
\begin{equation}
\psi_{Q}=\mathcal{O}_{5}\cup_{i\in\alpha}\mathcal{O}_{i}.\label{psi uses O5}%
\end{equation}
Consequently --- leaving aside the zero polynomial arising from the choice
$\alpha=\{1,2,3,4\},$ and so $\psi_{Q}=\operatorname{PG}(7,2)$ --- \emph{there
are precisely fifteen }$\mathcal{G}_{\mathcal{S}}$\emph{-invariant polynomial
functions }$Q$\emph{ of degree }$<8.$ In fact, as we now proceed to prove,
\emph{these fifteen }$G_{\mathcal{S}}$\emph{-invariant polynomials consist of
one quadratic, six quartic and eight sextic polynomials.}

\subsubsection{The $\mathcal{G}_{\mathcal{S}}$-invariant hyperbolic quadric
$\mathcal{H}_{7}$ \label{SSec H_7}}

The $\mathcal{G}_{\mathcal{S}}$-invariant tetrad $\mathcal{L}_{4}%
=\{L_{a},L_{b},L_{c},L_{d}\}$ of lines (\ref{La ... Ld}) gives rise to a
$\mathcal{G}_{\mathcal{S}}$-invariant set $\mathcal{U}_{4}=\{U_{h}%
\}_{h\in\{a,b,c,d\}}$ of four 5-flats, where $U_{h}$ denotes the span of the
three lines $\mathcal{L}_{4}\setminus L_{h}.$ If $P_{h}(x)=0$ is the quadratic
(Lemma \ref{Lem poldeg of flat X}) equation of $U_{h}$ then $Q_{2}%
:=P_{a}+P_{b}+P_{c}+P_{d}$ will be a $\mathcal{G}_{\mathcal{S}}$-invariant
polynomial of degree $\leq2,$ and so, from Lemma
\ref{Lem Genaral invt poly d<=4}, of the form
\begin{equation}
Q_{2}=c_{1}P_{1}+c_{2}P_{2}+c_{2}^{\prime}P_{2}^{\prime}+c_{2}^{\prime\prime
}P_{2}^{\prime\prime}. \label{General invariant Q2}%
\end{equation}
Consider the subset $\mathcal{P}_{81}$ consisting of those $3^{4}=81$ points
$p\in\operatorname*{PG}(7,2)$ of the form $p=%
%TCIMACRO{\tsum \nolimits_{h}}%
%BeginExpansion
{\textstyle\sum\nolimits_{h}}
%EndExpansion
p_{h},~0\neq p_{h}\in L_{h},$ which belong to none of the $U_{h},$ its
complement $(\mathcal{P}_{81})^{\text{c}}$ thus being the set of $255-81=174$
points which belong to the union of the $U_{h}.$ By Theorem \ref{Thm Orbits}
we see that either $\mathcal{P}_{81}=\mathcal{O}_{2}\cup\mathcal{O}_{5}$ or
$\mathcal{P}_{81}=\mathcal{O}_{4}\cup\mathcal{O}_{5}.$ Now the point
$e_{1}+e_{3},$ which from Table 1 is on the orbit $\mathcal{O}_{2},$ is from
(\ref{La ... Ld}) seen to lie in $\langle L_{a},L_{c}\rangle.$ Consequently
$\mathcal{P}_{81}=\mathcal{O}_{4}\cup\mathcal{O}_{5}$ and $(\mathcal{P}%
_{81})^{\text{c}}=\mathcal{O}_{1}\cup\mathcal{O}_{2}\cup\mathcal{O}_{3}.$ It
follows that $Q_{2}(p)=4=0$ for all $p\in\mathcal{O}_{4}\cup\mathcal{O}_{5}.$
Further $Q_{2}(p)=2=0$ for all $p\in\mathcal{O}_{2};$ for example, since
$e_{1}+e_{3}\in\langle L_{a},L_{c}\rangle,$ the point $e_{1}+e_{3}%
\in\mathcal{O}_{2}$ lies on two of the $U_{h},$ namely $U_{b}$ and $U_{d}.$ On
the other hand each point $p\in\mathcal{O}_{1}$ belongs to three of the
$U_{h}$ and so satisfies $Q_{2}(p)=1,$ while each point $p\in\mathcal{O}_{3}$
belongs to just one of the $U_{h}$ and so satisfies $Q_{2}(p)=3=1.$ It follows
that%
\begin{equation}
\psi_{Q_{2}}=\mathcal{O}_{2}\cup\mathcal{O}_{4}\cup\mathcal{O}_{5}.
\label{psi_Q 2 = O_2,4,5}%
\end{equation}

In particular, upon consulting Table 1, it follows from
(\ref{psi_Q 2 = O_2,4,5}) that $Q_{2}(e_{1})=0,$ $Q_{2}(e_{1}+e_{2})=0,$ and
$Q_{2}(e_{1}+e_{3})=0,$ whence in (\ref{General invariant Q2}) $c_{1}%
=c_{2}=c_{2}^{\prime}=0.$ Hence $Q_{2}=P_{2}^{\prime\prime}.$ In fact, upon
appeal to a later result --- see theorem \ref{Thm 7 invt polys of deg <5}
below --- in $\operatorname*{PG}(7,2)$ no polynomial of degree 2 other than
$P_{2}^{\prime\prime}$ is $\mathcal{G}_{\mathcal{S}}$-invariant. Consequently
the following theorem holds.

\begin{theorem}
\label{Thm Q_2}In $\operatorname*{PG}(7,2)$ the quadratic polynomial
\begin{equation}
Q_{2}=x_{1}x_{8}+x_{2}x_{7}+x_{3}x_{6}+x_{4}x_{5} \label{Q2  = 18 + ... +45}%
\end{equation}
is the unique polynomial of degree $2$ which is $\mathcal{G}_{\mathcal{S}}%
$-invariant. Consequently the Segre variety $\mathcal{O}_{5}=\mathcal{S}%
:=\mathcal{S}_{1,1,1}(2)$ is a subset of the $\mathcal{G}_{\mathcal{S}}%
$-invariant hyperbolic quadric $\mathcal{H}_{7}=\mathcal{O}_{2}\cup
\mathcal{O}_{4}\cup\mathcal{O}_{5}\subset\operatorname*{PG}(7,2)$ having
equation $Q_{2}(x)=0.$
\end{theorem}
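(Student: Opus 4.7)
The plan is to build a $\mathcal{G}_{\mathcal{S}}$-invariant quadratic polynomial directly from the $\mathcal{G}_{\mathcal{S}}$-invariant tetrad $\mathcal{L}_4 = \{L_a, L_b, L_c, L_d\}$ produced in Section \ref{Sec Orbits and triplets}, and then to identify it by means of the classification in Lemma \ref{Lem Genaral invt poly d<=4}. For each $h \in \{a,b,c,d\}$ let $U_h$ be the $5$-flat spanned by $\mathcal{L}_4 \setminus \{L_h\}$; by Lemma \ref{Lem poldeg of flat X} each $U_h$ is cut out by a polynomial $P_h$ of degree $2$. Since $\mathcal{G}_{\mathcal{S}}$ permutes the tetrad, it permutes the $U_h$, so
\[
Q_2 := P_a + P_b + P_c + P_d
\]
is automatically $\mathcal{G}_{\mathcal{S}}$-invariant of degree at most $2$.

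The next step is to compute $\psi_{Q_2}$ by counting, for each $\mathcal{G}_{\mathcal{S}}$-orbit, how many of the $U_h$ contain a representative point. The $81$ points that belong to \emph{no} $U_h$ are exactly the sums $p_a + p_b + p_c + p_d$ with $0 \ne p_h \in L_h$; by Theorem \ref{Thm Orbits} this $81$-set must be either $\mathcal{O}_5 \cup \mathcal{O}_4$ or $\mathcal{O}_5 \cup \mathcal{O}_2$. Checking that $e_1 + e_3 \in \mathcal{O}_2$ lies on the $3$-flat $\langle L_a, L_c \rangle \subset U_b \cap U_d$ rules out the second alternative, so the complement of $U_a \cup U_b \cup U_c \cup U_d$ is $\mathcal{O}_5 \cup \mathcal{O}_4$. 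A parallel count then shows that points of $\mathcal{O}_2$ lie in exactly two of the $U_h$, points of $\mathcal{O}_1$ in three, and points of $\mathcal{O}_3$ in one. Since $P_h(p) = 0$ iff $p \in U_h$, the sum $Q_2(p)$ reduces mod $2$ to the parity of $|\{h : p \in U_h\}|$, yielding $\psi_{Q_2} = \mathcal{O}_2 \cup \mathcal{O}_4 \cup \mathcal{O}_5$; this is the claimed $\mathcal{G}_{\mathcal{S}}$-invariant hyperbolic quadric $\mathcal{H}_7$ containing $\mathcal{S}$.

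Finally, since $\mathcal{G}_{\mathcal{B}} < \mathcal{G}_{\mathcal{S}}$, Lemma \ref{Lem Genaral invt poly d<=4} forces $Q_2 = c_1 P_1 + c_2 P_2 + c_2' P_2' + c_2'' P_2''$ for some scalars in $\operatorname{GF}(2)$. Evaluating successively at $e_1 \in \mathcal{O}_5$, $e_1 + e_2 \in \mathcal{O}_5$, and $e_1 + e_3 \in \mathcal{O}_2$ (all inside $\psi_{Q_2}$, with weights read off from Table 1) forces $c_1 = c_2 = c_2' = 0$. Since $Q_2$ is not identically zero, $c_2'' = 1$, giving $Q_2 = P_2'' = x_1 x_8 + x_2 x_7 + x_3 x_6 + x_4 x_5$. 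The uniqueness assertion --- that no other polynomial of degree $2$ is $\mathcal{G}_{\mathcal{S}}$-invariant --- reduces to checking that none of $P_1, P_2, P_2'$ is $\mathcal{G}_{\mathcal{S}}$-invariant on its own; this is either verified directly on pairs of orbit representatives of different $\mathcal{G}_{\mathcal{B}}$-type (e.g.\ $e_1$ and $u$, both in $\mathcal{O}_5$, give $P_1(e_1) \ne P_1(u)$) or invoked from the forthcoming Theorem \ref{Thm 7 invt polys of deg <5}. The step I expect to require the most care is the incidence count of orbit representatives with the four $U_h$: it is precisely these parities that simultaneously identify $\psi_{Q_2}$ and pin down the coefficients $c_1, c_2, c_2', c_2''$.
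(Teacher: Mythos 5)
Your proposal is correct and follows essentially the same route as the paper: form $Q_{2}=P_{a}+P_{b}+P_{c}+P_{d}$ from the four invariant $5$-flats $U_{h}$, identify $\mathcal{P}_{81}$ as $\mathcal{O}_{4}\cup\mathcal{O}_{5}$ via the point $e_{1}+e_{3}\in\langle L_{a},L_{c}\rangle$, read off $\psi_{Q_{2}}=\mathcal{O}_{2}\cup\mathcal{O}_{4}\cup\mathcal{O}_{5}$ from the incidence parities, and pin down the coefficients in Lemma \ref{Lem Genaral invt poly d<=4} by evaluating at $e_{1}$, $e_{1}+e_{2}$, $e_{1}+e_{3}$. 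One small caveat: uniqueness does \emph{not} reduce to checking that $P_{1},P_{2},P_{2}^{\prime}$ individually fail to be invariant (a nonzero combination such as $P_{2}+P_{2}^{\prime}$ would also need excluding), but your alternative of invoking Theorem \ref{Thm 7 invt polys of deg <5} is precisely what the paper does.
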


\begin{remark}
\label{Rmk Can get Sp(8,2) in another way}The Segre variety $\mathcal{S}%
_{1,1,1}(2)$ thus determines a particular orthogonal group $\operatorname{GO}%
_{8}^{+}(2)\cong\operatorname{O}_{8}^{+}(2).2$ in $\operatorname*{GL}(8,2),$
and hence determines that particular $\operatorname{Sp}(8,2)$ subgroup of
$\operatorname*{GL}(8,2)$ which leaves invariant the non-degenerate symplectic
form $B(x,y)=Q_{2}(x+y)+Q_{2}(x)+Q_{2}(y).$ In fact this particular
$\mathcal{G}_{\mathcal{S}}$-invariant symplectic form can be arrived at much
more simply (\textit{cf}.\ \cite[Section 2]{HavlicekOdehnalSaniga}). For since
we are working over $\operatorname*{GF}(2)$ the group $\operatorname*{GL}%
(V_{2})$ coincides with $\operatorname{Sp}(V_{2}),$ the space $V_{2}=V(2,2)$
possessing a unique nonzero symplectic form. So the tensor product space
$V_{8}=V_{2}\otimes V_{2}\otimes V_{2}$ thereby inherits a particular
$\operatorname{Sp}(8,2)$ geometry.
\end{remark}

\begin{remark}
The orders of the simple group $\operatorname{O}_{8}^{+}(2)$ and of
$\operatorname{GO}_{8}^{+}(2),$ are%
\begin{align}
|\operatorname{O}_{8}^{+}(2)|  &  =2^{12}.3^{5}.5^{2}%
.7=174,182,400\label{|SimpleOgroup|}\\
|\operatorname{GO}_{8}^{+}(2)|  &  =2^{13}.3^{5}.5^{2}.7=348,364,800
\label{|FullOgroup|}%
\end{align}
--- see \cite[{\emph{p.~}[85]}]{Atlas}, If $K$ denotes the involution
$e_{1}\rightleftarrows e_{8},$ and if $K^{\prime}$ denotes the involution
$e_{1}\rightleftarrows e_{8},~e_{2}\rightleftarrows e_{7},$ we checked, using
Magma \cite{MAGMA}, that
\begin{align}
\langle\mathcal{G}_{\mathcal{S}},K\rangle &  =\operatorname{GO}_{8}%
^{+}(2),\label{generating GO}\\
\langle\mathcal{G}_{\mathcal{S}},K^{\prime}\rangle &  =\operatorname{O}%
_{8}^{+}(2). \label{generating O}%
\end{align}

\end{remark}

\subsubsection{The six $\mathcal{G}_{\mathcal{S}}$-invariant quartics
\label{SSec Six invariant quartics}}

The ambient space $Y_{i}^{r}$ of $\sigma_{i}^{r},$ see (\ref{nine S_1,1}), is
a $3$-flat and so has a quartic equation, say $P_{i}^{r}(x)=0.$ Consider the
polynomial
\begin{equation}
Q_{4}:=%
%TCIMACRO{\tsum \nolimits_{i=0}^{2}}%
%BeginExpansion
{\textstyle\sum\nolimits_{i=0}^{2}}
%EndExpansion%
%TCIMACRO{\tsum \nolimits_{r=1}^{3}}%
%BeginExpansion
{\textstyle\sum\nolimits_{r=1}^{3}}
%EndExpansion
P_{i}^{r}. \label{Q4 = sum of 9 quartics}%
\end{equation}
Each point $x\in\mathcal{S}=\mathcal{O}_{5}$ lies on precisely three of the
nine $3$-flats $Y_{i}^{r},$ and so $Q_{4}(x)=6=0.$ Since each of the six
points $x\in Y_{i}^{r}\setminus\sigma_{i}^{r}$ lies in $Y_{j}^{s}$ only for
$s=r$ and $j=i,\ $we also have $Q_{4}(x)=8=0$ for $x\in\mathcal{O}_{2}.$
Further $Q_{4}(x)=9=1$ for $x\notin\mathcal{O}_{2}\cup\mathcal{O}_{5}.$ So the
81-set $\mathcal{O}_{2}\cup\mathcal{O}_{5}$ has equation $Q_{4}(x)=0$ where
$\deg Q_{4}\leq4.$ To prove that $\deg Q_{4}=4$ we may invoke part (ii) of
lemma \ref{Lem pol deg d iff (i), (ii)}. For, using the basis $\mathcal{B},$
the 3-flat $X:=\langle e_{1},e_{2},e_{7},e_{8}\rangle$ is seen to meet
$\mathcal{\psi}_{Q_{4}}=\mathcal{O}_{2}\cup\mathcal{O}_{5}$ in an even number
of points, namely the six points on the two generators $\langle e_{1}%
,e_{2}\rangle$ and $\langle e_{7},e_{8}\rangle$ together with the two points
$e_{1}+e_{7}$ and $e_{2}+e_{8}.$ Consequently we have proved part (i) of the
following theorem, the $\mathcal{G}_{\mathcal{S}}$ invariance following since
$\mathcal{S}$ determines uniquely the nine $\mathcal{S}_{1.1}$ varieties in
(\ref{nine S_1,1}).

\begin{theorem}
\label{Thm Q_4}(i) The 81-set $\mathcal{O}_{2}\cup\mathcal{O}_{5}$ is a
quartic hypersurface $Q_{4}(x)=0$ in $\operatorname*{PG}(7,2)$ which is
invariant under the action of $\mathcal{G}_{\mathcal{S}}.$

(ii) Using the basis (\ref{Basis e_i}) the quartic polynomial $Q_{4}$ has the
explicit form%
\begin{equation}
Q_{4}=P_{2}^{\prime\prime}+P_{3}^{\prime}+P_{4}^{\prime\prime\prime}%
+P_{4}^{\text{v}}. \label{Q4 = P2P3P4P4}%
\end{equation}

\end{theorem}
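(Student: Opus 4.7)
My plan is to adapt the sketch preceding the theorem statement into a clean argument. Setting $Q_4 := \sum_{i=0}^{2}\sum_{r=1}^{3}P_i^{r}$, where $P_i^{r}$ is the quartic equation of the $3$-flat $Y_i^{r}$ furnished by Lemma \ref{Lem poldeg of flat X}, I first verify two incidence facts: each $E_{ijk}\in\mathcal{O}_{5}$ lies in exactly the three flats $Y_i^{1},Y_j^{2},Y_k^{3}$, and each pairwise intersection $Y_i^{r}\cap Y_j^{s}$ with $(i,r)\neq(j,s)$ is contained in $\mathcal{O}_{5}$. The second fact forces every point of $\mathcal{O}_{2}=\bigcup(Y_i^{r}\setminus\sigma_i^{r})$ to lie in exactly one flat. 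The counts $Q_4\equiv 6\equiv 0$ on $\mathcal{O}_{5}$, $Q_4\equiv 8\equiv 0$ on $\mathcal{O}_{2}$, and $Q_4\equiv 9\equiv 1$ elsewhere then yield $\psi_{Q_4}=\mathcal{O}_{2}\cup\mathcal{O}_{5}$ and $\deg Q_4\leq 4$. To obtain $\deg Q_4=4$ I invoke Lemma \ref{Lem pol deg d iff (i), (ii)}(ii) with the $3$-flat $X=\langle e_1,e_2,e_7,e_8\rangle$, which meets $\psi_{Q_4}$ in the six points of the two generators $\langle e_1,e_2\rangle,\langle e_7,e_8\rangle$ together with $e_1+e_7$ and $e_2+e_8$, an even intersection of size $8$. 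The $\mathcal{G}_{\mathcal{S}}$-invariance of $Q_4$ is then automatic, since $\mathcal{S}$ intrinsically determines the nine subvarieties $\sigma_i^{r}$ and $\mathcal{G}_{\mathcal{S}}$ permutes them.

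\textbf{Part (ii).} Granted part (i), $Q_4$ is a $\mathcal{G}_{\mathcal{S}}$-invariant polynomial of degree at most $4$, hence a fortiori $\mathcal{G}_{\mathcal{B}}$-invariant. By Lemma \ref{Lem Genaral invt poly d<=4} it lies in the $13$-dimensional space $\mathcal{V}$ spanned by $P_1,P_2,P_2',P_2'',P_3,P_3',P_3'',P_4,P_4',P_4'',P_4''',P_4^{\text{iv}},P_4^{\text{v}}$; the candidate $R:=P_2''+P_3'+P_4'''+P_4^{\text{v}}$ is also $\mathcal{G}_{\mathcal{B}}$-invariant and lies in $\mathcal{V}$. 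By the uniqueness of the reduced polynomial representation noted in Section~\ref{Sec Intro}, the identity $Q_4=R$ will follow once I check that $R$ vanishes on $\mathcal{O}_{2}\cup\mathcal{O}_{5}$ and takes the value $1$ on the complement. Since $R$ is $\mathcal{G}_{\mathcal{B}}$-invariant, this reduces to a finite evaluation on the $21$ representative points $p_{i,w}$ of Table~1.

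I expect the main obstacle to be the bookkeeping in this verification, especially for $P_4^{\text{v}}$, whose $24$ monomials must be tested against each of the $21$ support patterns. A useful shortcut is to decompose the four summands by their "support type": $P_2''$ registers the parity of the number of space-diagonal pairs $\{1,8\},\{2,7\},\{3,6\},\{4,5\}$ that are both supported at $x$; $P_3'$ and $P_4'''$ depend only on the number of supported indices in the two "parity tetrahedra" $\{1,3,5,7\}$ and $\{2,4,6,8\}$; and $P_4^{\text{v}}$ picks up the remaining orbit contributions. Reading off these four contributions orbit-by-orbit from Table~1, or alternatively expanding the nine quartics $P_i^r$ given by Lemma \ref{Lem poldeg of flat X} and summing modulo $2$, yields the claimed formula without a monomial-by-monomial grind.
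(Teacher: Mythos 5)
Your proposal is correct and follows essentially the same route as the paper: part (i) reproduces the paper's incidence count on the nine flats $Y_i^r$ (including the same $3$-flat $\langle e_1,e_2,e_7,e_8\rangle$ for the degree lower bound) and the same invariance argument. For part (ii) the paper solves for the thirteen coefficients in Lemma \ref{Lem Genaral invt poly d<=4} by evaluating at the weight-$\leq 4$ representatives of Table 1 (which, since $\deg Q_4\leq 4$, suffices), whereas you verify the given candidate on all $21$ $\mathcal{G}_{\mathcal{B}}$-orbit representatives --- a logically equivalent but slightly heavier finite check; both are valid.
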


\begin{proof}
(ii) Since, from part (i), $\deg Q_{4}=4,$ we know that $Q_{4}$ is as in
(\ref{most general Q_4}). To determine the coefficients $c_{1},c_{2}%
,\ldots,c_{4}^{\text{v}}$ in (\ref{most general Q_4}) we simply use
$Q_{4}(x)=0$ for $x\in\mathcal{O}_{2}\cup\mathcal{O}_{5},$ and $Q_{4}(x)=1$
for $x\in\mathcal{O}_{1}\cup\mathcal{O}_{3}\cup\mathcal{O}_{4},$ confining our
attention to those points $x$ in Table 1 having weight $\leq4.$ From
$Q_{4}(e_{1})=0,$ $Q_{4}(e_{1}+e_{2})=0,$ $Q_{4}(e_{1}+e_{3})=0$ and
$Q_{4}(e_{1}+e_{8})=1,$ it follows that $c_{1}=c_{2}=c_{2}^{\prime}=0$ and
$c_{2}^{\prime\prime}=1.$ So $Q_{4}=P_{2}^{\prime\prime}+$ terms of degree
$>2.$ From $Q_{4}(123)=0$, $Q_{4}(135)=1$ and $Q_{4}(128)=1,\ $and since
$P_{2}^{\prime\prime}(123)=0,P_{2}^{\prime\prime}(135)=0$ and $P_{2}%
^{\prime\prime}(128)=1,$ it follows that $c_{3}=0,c_{3}^{\prime}=1$ and
$c_{3}^{\prime\prime}=0.$ So%
\begin{equation}
Q_{4}=P_{2}^{\prime\prime}+P_{3}^{\prime}+c_{4}P_{4}+c_{4}^{\prime}%
P_{4}^{\prime}+c_{4}^{\prime\prime}P_{4}^{\prime\prime}+c_{4}^{\prime
\prime\prime}P_{4}^{\prime\prime\prime}+c_{4}^{\text{iv}}P_{4}^{\text{iv}%
}+c_{4}^{\text{v}}P_{4}^{\text{v}}. \label{P2+P3+deg=4}%
\end{equation}
Next we consider in turn the six points $1234,1278,1246,1357,1238,1248$ in
Table 1 which have weight 4. Each of the points $x=1234\in\mathcal{O}_{5}$ and
$x=1278\in\mathcal{O}_{2}$ satisfies $P_{2}^{\prime\prime}(x)=P_{3}^{\prime
}(x)=0,$ and so, from $Q_{4}(x)=0$ for $x\in\mathcal{O}_{5}\cup\mathcal{O}%
_{2},$ we obtain $c_{4}=c_{4}^{\prime}=0.$ The point $x=1246\in\mathcal{O}%
_{4}$ satisfies $P_{2}^{\prime\prime}(x)=0$ and $P_{3}^{\prime}(x)=1,$ and so
from $Q_{4}(x)=1$ we obtain $c_{4}^{\prime\prime}=0.$ The point $x=1357\in
\mathcal{O}_{4}$ satisfies $P_{2}^{\prime\prime}(x)=0$ and $P_{3}^{\prime
}(x)=4=0,$ and so from $Q_{4}(x)=1$ we obtain $c_{4}^{\prime\prime\prime}=1.$
The point $x=1238\in\mathcal{O}_{3}$ satisfies $P_{2}^{\prime\prime}(x)=1$ and
$P_{3}^{\prime}(x)=0,$ and so from $Q_{4}(x)=1$ we obtain $c_{4}^{\text{iv}%
}=0.$ Finally the point $x=1248\in\mathcal{O}_{3}$ satisfies $P_{2}%
^{\prime\prime}(x)=1$ and $P_{3}^{\prime}(x)=1,$ and so from $Q_{4}(x)=1$ we
obtain $c_{4}^{\text{v}}=1.$
\end{proof}

\begin{remark}
\label{Rmk About Thm Q_4}Knowing from part (i) of the theorem that $Q_{4}$
does not contain terms of degree $>4$ there was no need in the proof of part
(ii) to consider points of weight greater than 4; such points will necessarily
satisfy the conditions $Q_{4}(x)=0$ for $x\in\mathcal{O}_{2}\cup
\mathcal{O}_{5},$ and $Q_{4}(x)=1$ for $x\in\mathcal{O}_{1}\cup\mathcal{O}%
_{3}\cup\mathcal{O}_{4}.$

Of course the quartic polynomial $Q_{4}$ could alternatively have been
obtained directly from (\ref{Q4 = sum of 9 quartics}) by feeding in the
explicit coordinate forms of the nine $P_{i}^{r}.$
\end{remark}

\begin{remark}
\label{Rmk Zijk yield Q_2}The polynomial $Q_{4}$ in
(\ref{Q4 = sum of 9 quartics}) arose from the nine $3$-flats $Y_{i}^{r}.$ If
instead we consider the corresponding polynomial $Q$ arising from the
twenty-seven $3$-flats $Z_{ijk},$ see equation (\ref{27 3-flats Z}), we
quickly find that $\psi_{Q}=\mathcal{O}_{2}\cup\mathcal{O}_{4}\cup
\mathcal{O}_{5}.$ Consequently $Q$ is not a quartic but is in fact the
quadratic $Q_{2}$ of Theorem \ref{Thm Q_2}.
\end{remark}

\begin{theorem}
\label{Thm Q'_4}The 189-set $\mathcal{O}_{3}\cup\mathcal{O}_{4}\cup
\mathcal{O}_{5}$ is a $\mathcal{G}_{\mathcal{S}}$-invariant quartic
hypersurface in $\operatorname*{PG}(7,2)$ with equation $Q_{4}^{\prime}(x)=0$
where%
\begin{equation}
Q_{4}^{\prime}=P_{2}^{\prime}+P_{3}^{\prime}+P_{3}^{\prime\prime}%
+P_{4}^{\prime}. \label{Q'4 = P2P3P3P4}%
\end{equation}

\end{theorem}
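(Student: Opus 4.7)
The plan is to follow the template established for Theorem \ref{Thm Q_4}: produce a $\mathcal{G}_{\mathcal{S}}$-invariant quartic whose vanishing set is $\mathcal{O}_3\cup\mathcal{O}_4\cup\mathcal{O}_5$, and then pin down its explicit form by the coefficient method of Lemma \ref{Lem Genaral invt poly d<=4}.

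The natural construction uses the $\mathcal{G}_{\mathcal{S}}$-invariant tetrad $\mathcal{L}_4=\{L_a,L_b,L_c,L_d\}$ of (\ref{La ... Ld}). Since $\mathcal{G}_{\mathcal{S}}$ acts on $\mathcal{L}_4$ as $\operatorname*{Sym}(4)$ on four symbols, the set of six $3$-flats $U_{hk}:=\langle L_h,L_k\rangle$, $\{h,k\}\subset\{a,b,c,d\}$, is $\mathcal{G}_{\mathcal{S}}$-invariant. By Lemma \ref{Lem poldeg of flat X} each $U_{hk}$ has a quartic equation $P_{U_{hk}}(x)=0$, so
\[
\widetilde{Q}\,:=\,\sum_{\{h,k\}\subset\{a,b,c,d\}}P_{U_{hk}}
\]
is a $\mathcal{G}_{\mathcal{S}}$-invariant polynomial of degree $\leq 4$, whose value at $x$ equals, modulo $2$, the number of $U_{hk}$ containing $x$.

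To identify $\psi_{\widetilde{Q}}$, first note that each $x\in\mathcal{O}_1=\mathcal{P}(\mathcal{L}_4)$ lies on exactly one line $L_h$ and therefore on exactly the three $U_{hk}$ with $k\neq h$; so $\widetilde{Q}(x)=3\equiv 1$. By orbit transitivity the multiplicity $m_i$ is constant on each $\mathcal{O}_i$, so counting the $6\times 15=90$ incidences yields the Diophantine constraint $36+54m_2+108m_3+54m_4+27m_5=90$. The coefficient $27$ forces $m_5$ even, hence $m_5\in\{0,2\}$; the choice $m_5=2$ then forces $m_2=m_3=m_4=0$, which is ruled out by the single direct check $1278=18u+27u\in U_{ab}\cap\mathcal{O}_2$. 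Hence $m_5=m_3=0$ and $m_2+m_4=1$, and the same check gives $m_2=1$, $m_4=0$. Thus $\widetilde{Q}=1$ precisely on $\mathcal{O}_1\cup\mathcal{O}_2$, so $\psi_{\widetilde{Q}}=\mathcal{O}_3\cup\mathcal{O}_4\cup\mathcal{O}_5$.

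Finally, to reach the stated formula, I apply Lemma \ref{Lem Genaral invt poly d<=4} and evaluate $\widetilde{Q}$ at the weight-$\leq 4$ representatives of Table 1 using the now-known dichotomy $\widetilde{Q}(x)=1\iff x\in\mathcal{O}_1\cup\mathcal{O}_2$. Exactly as in Theorem \ref{Thm Q_4}, the coefficients $c_1,c_2,\ldots,c_4^{\text{v}}$ are determined one at a time, yielding $c_2'=c_3'=c_3''=c_4'=1$ with all other coefficients zero, and hence $Q_4'=P_2'+P_3'+P_3''+P_4'$. The main obstacle I anticipate is the multiplicity step: ruling out $m_5=2$ and deciding between $m_2=1,m_4=0$ and the reverse. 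Both issues are resolved by the single spot-check $1278\in U_{ab}\cap\mathcal{O}_2$, so this hurdle is modest; the remainder of the argument is a routine coefficient computation parallel to that in the proof of Theorem \ref{Thm Q_4}.
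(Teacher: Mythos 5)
Your proposal is correct and follows essentially the same route as the paper: the invariant set of six $3$-flats $U_{hk}=\langle L_h,L_k\rangle$, the sum of their quartic equations from Lemma \ref{Lem poldeg of flat X}, and the coefficient determination via Lemma \ref{Lem Genaral invt poly d<=4} exactly as in Theorem \ref{Thm Q_4}(ii). The only (harmless) variation is in fixing the incidence multiplicities: the paper reads them off from the previously established fact that the external points on the $54$ bisecants of $\mathcal{O}_{1}$ are precisely $\mathcal{O}_{2}$, whereas you obtain the same values $m_{2}=1$, $m_{3}=m_{4}=m_{5}=0$ by double-counting the $90$ point--flat incidences together with the spot-check $1278\in U_{ab}\cap\mathcal{O}_{2}$.
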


\begin{proof}
The $\mathcal{G}_{\mathcal{S}}$-invariant tetrad $\mathcal{L}_{4}%
=\{L_{a},L_{b},L_{c},L_{d}\}$ of lines (\ref{La ... Ld}) gives rise to a
$\mathcal{G}_{\mathcal{S}}$-invariant set $\{U_{ab},U_{ac},U_{ad}%
,U_{bc},U_{bd},U_{cd}\}$ of six 3-flats, where $U_{hk}:=\langle L_{h}%
,L_{k}\rangle.$ If $P_{hk}(x)=0$ is the quartic (Lemma
\ref{Lem poldeg of flat X}) equation of $U_{hk}$ then the sum $Q_{4}^{\prime
}\ $of the six $P_{hk}$ will be a $\mathcal{G}_{\mathcal{S}}$-invariant
polynomial of degree $\leq4.$ Recalling that the points external to
$\mathcal{O}_{1}=\mathcal{P}(\mathcal{L}_{4})$ on the bisecants of
$\mathcal{O}_{1}$ form the orbit $\mathcal{O}_{2},$ we see that $Q_{4}%
^{\prime}(x)=6=0$ for $x\in\mathcal{O}_{3}\cup\mathcal{O}_{4}\cup
\mathcal{O}_{5},$ while $Q_{4}^{\prime}(x)=3=1$ if $x\in\mathcal{O}_{1}$ and
$Q_{4}^{\prime}(x)=5=1$ if $x\in\mathcal{O}_{2}.$ So $\psi_{Q_{4}^{\prime}%
}=\mathcal{O}_{3}\cup\mathcal{O}_{4}\cup\mathcal{O}_{5}$ as claimed.

Proceeding now on exactly the same lines as in the proof of part (ii) of the
preceding theorem, we quickly arrive at the explicit form
(\ref{Q'4 = P2P3P3P4}) for $Q_{4}^{\prime}$ (showing in particular that
$Q_{4}^{\prime}$ indeed has degree 4).
\end{proof}

\begin{theorem}
\label{Thm 7 invt polys of deg <5}There exist precisely seven $\mathcal{G}%
_{\mathcal{S}}$-invariant polynomials $Q$ of degree $\leq4$ in the coordinates
$x_{1},x_{2},\ldots,x_{8},$ as displayed in the following table:%
\begin{equation}%
\begin{tabular}
[c]{cc|ccccc|cc}\cline{3-7}
&  & \multicolumn{5}{|c|}{$Q(p)$ if $p\in$} &  & \\\cline{1-2}\cline{8-9}%
\multicolumn{1}{|c}{$Q$} & \multicolumn{1}{|c|}{$\deg Q$} & $\mathcal{O}_{1}$
& $\mathcal{O}_{2}$ & $\mathcal{O}_{3}$ & $\mathcal{O}_{4}$ & $\mathcal{O}%
_{5}$ & $\psi_{Q}$ & \multicolumn{1}{|c|}{$|\psi_{Q}|$}\\\hline
\multicolumn{1}{|c}{$Q_{2}$} & \multicolumn{1}{|c|}{2} & 1 & 0 & 1 & 0 & 0 &
$\mathcal{O}_{2}\cup\mathcal{O}_{4}\cup\mathcal{O}_{5}$ &
\multicolumn{1}{|c|}{135}\\
\multicolumn{1}{|c}{$Q_{4}$} & \multicolumn{1}{|c|}{4} & 1 & 0 & 1 & 1 & 0 &
$\mathcal{O}_{2}\cup\mathcal{O}_{5}$ & \multicolumn{1}{|c|}{81}\\
\multicolumn{1}{|c}{$Q_{4}^{\prime}$} & \multicolumn{1}{|c|}{4} & 1 & 1 & 0 &
0 & 0 & $\mathcal{O}_{3}\cup\mathcal{O}_{4}\cup\mathcal{O}_{5}$ &
\multicolumn{1}{|c|}{189}\\
\multicolumn{1}{|c}{$Q_{4}+Q_{4}^{\prime}$} & \multicolumn{1}{|c|}{4} & 0 &
1 & 1 & 1 & 0 & $\mathcal{O}_{1}\cup\mathcal{O}_{5}$ &
\multicolumn{1}{|c|}{39}\\
\multicolumn{1}{|c}{$Q_{2}+Q_{4}$} & \multicolumn{1}{|c|}{4} & 0 & 0 & 0 & 1 &
0 & $\mathcal{O}_{1}\cup\mathcal{O}_{2}\cup\mathcal{O}_{3}\cup\mathcal{O}_{5}$
& \multicolumn{1}{|c|}{201}\\
\multicolumn{1}{|c}{$Q_{2}+Q_{4}^{\prime}$} & \multicolumn{1}{|c|}{4} & 0 &
1 & 1 & 0 & 0 & $\mathcal{O}_{1}\cup\mathcal{O}_{4}\cup\mathcal{O}_{5}$ &
\multicolumn{1}{|c|}{93}\\
\multicolumn{1}{|c}{$Q_{2}+Q_{4}+Q_{4}^{\prime}$} & \multicolumn{1}{|c|}{4} &
1 & 1 & 0 & 1 & 0 & $\mathcal{O}_{3}\cup\mathcal{O}_{5}$ &
\multicolumn{1}{|c|}{135}\\\hline
\end{tabular}
\ \ \label{Table of 7 invariant Q}%
\end{equation}

\end{theorem}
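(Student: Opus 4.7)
The plan is to proceed in three steps: construct the seven polynomials, compute their zero-sets $\psi_Q$ so as to fill out the table, and argue that no further $\mathcal{G}_{\mathcal{S}}$-invariant polynomial of degree $\leq 4$ exists.

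For the construction, Theorems~\ref{Thm Q_2}, \ref{Thm Q_4} and \ref{Thm Q'_4} already furnish $Q_2, Q_4, Q_4'$ as $\mathcal{G}_{\mathcal{S}}$-invariant polynomials of degrees $2, 4, 4$ respectively. Any $\operatorname{GF}(2)$-linear combination of these is again $\mathcal{G}_{\mathcal{S}}$-invariant and of degree at most $4$. Their orbit-value vectors in the table, namely $(1,0,1,0,0)$, $(1,0,1,1,0)$, $(1,1,0,0,0)$, are linearly independent over $\operatorname{GF}(2)$, so the seven nonzero combinations are pairwise distinct and populate the seven rows of the table. The column $Q(p)$ for each combination is obtained by adding the rows for $Q_2, Q_4, Q_4'$ modulo $2$, and $\psi_Q$ then reads off as the union of those orbits on which the sum is $0$; for example $\psi_{Q_4+Q_4'}=\mathcal{O}_1\cup\mathcal{O}_5$ of cardinality $12+27=39$, as recorded in row~4.

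The main obstacle is completeness: showing that no further $\mathcal{G}_{\mathcal{S}}$-invariant polynomial of degree $\leq 4$ exists. Observe that such a polynomial $Q$ is constant on each $\mathcal{G}_{\mathcal{S}}$-orbit and so is fully specified by its orbit-value vector $(Q|_{\mathcal{O}_1},\ldots,Q|_{\mathcal{O}_5})\in\operatorname{GF}(2)^5$. Since $\deg Q<8$ forces $|\psi_Q|$ odd (Section~\ref{Sec Intro}), and since $\mathcal{O}_5$ is the only orbit of odd order, we must have $Q|_{\mathcal{O}_5}=0$, leaving $2^4=16$ candidate vectors. Of these, $8$ lie in the $3$-dimensional span constructed above and correspond to the seven listed polynomials together with the zero polynomial; it remains to exclude the other $8$ vectors.

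To carry out this exclusion I would invoke Lemma~\ref{Lem Genaral invt poly d<=4}: every $\mathcal{G}_{\mathcal{S}}$-invariant polynomial of degree $\leq 4$ is in particular $\mathcal{G}_{\mathcal{B}}$-invariant and hence expressible in the $13$-parameter form~(\ref{most general Q_4}). Imposing additionally invariance under the extra generator $A_x$ of $\mathcal{G}_{\mathcal{S}}$, whose coordinate action is read off from~(\ref{A_x}), yields a linear system on the coefficients $c_1, c_2, \ldots, c_4^{\text{v}}$ whose solution space (checked by direct substitution, or confirmed via Magma) is exactly the $3$-dimensional span of $Q_2, Q_4, Q_4'$. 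A slower but more self-contained alternative, using part~(ii) of Lemma~\ref{Lem pol deg d iff (i), (ii)}, would be to exhibit for each of the $8$ excluded orbit-unions a $4$-flat meeting it in an even number of points, thereby forcing the polynomial degree to exceed $4$.
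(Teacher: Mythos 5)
Your construction of the seven polynomials as the nonzero $\operatorname{GF}(2)$-linear combinations of $Q_{2},Q_{4},Q_{4}^{\prime}$, together with the verification of the table by adding the orbit-value vectors $(1,0,1,0,0)$, $(1,0,1,1,0)$, $(1,1,0,0,0)$, is exactly what the paper does. Where you genuinely diverge is the completeness step. The paper argues by counting: it recalls from Section \ref{SSec G_S InvtPoly deg <8} that there are precisely fifteen $\mathcal{G}_{\mathcal{S}}$-invariant polynomials of degree $<8$, one for each proper subset $\alpha\subset\{1,2,3,4\}$ via $\psi_{Q}=\mathcal{O}_{5}\cup_{i\in\alpha}\mathcal{O}_{i}$, and then looks ahead to Section \ref{SSec Eight invariant sextics}, where the remaining eight are realised as $Q_{6}$ and $Q_{6}+Q$ for the seven $Q$ of the table, all of degree exactly $6$ (this rests on Theorem \ref{Thm S_111 has degree 6} and the fact that adding the sextic $Q_{6}$ to a quartic cannot cancel the degree-$6$ part); hence nothing further of degree $\leq4$ can exist. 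You instead stay entirely within the degree-$\leq4$ world: every $\mathcal{G}_{\mathcal{S}}$-invariant of degree $\leq4$ lies in the $13$-parameter family of Lemma \ref{Lem Genaral invt poly d<=4}, and imposing invariance under the extra generator $A_{x}$ is a linear condition (substitution by $A_{x}^{-1}$ preserves the filtration (\ref{Nesting of F_r})) whose solution space you claim is the $3$-dimensional span of $Q_{2},Q_{4},Q_{4}^{\prime}$; logically this needs only the trivial direction that a $\mathcal{G}_{\mathcal{S}}$-invariant is both $\mathcal{G}_{\mathcal{B}}$- and $A_{x}$-invariant. This is a valid and more self-contained route, avoiding any forward reference, but its decisive rank computation is asserted rather than carried out, and it is somewhat messier in practice because $A_{x}$, unlike the elements of $\mathcal{G}_{\mathcal{B}}$, is not a coordinate permutation. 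Your fallback --- exhibiting a $\psi$-even $4$-flat for each of the eight excluded orbit-unions via Lemma \ref{Lem pol deg d iff (i), (ii)}(ii) --- would also work, but amounts to the paper's sextic computation in disguise.
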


\begin{proof}
We have already met $Q_{2},Q_{4}$ and $Q_{4}^{\prime};$ linear combinations of
these three polynomials yield the further four polynomials displayed in the
last four rows of the table. To prove that there are no $\mathcal{G}%
_{\mathcal{S}}$-invariant polynomials of degree $\leq4$ other than the seven
in the table, recall, see after equation (\ref{psi uses O5}), that there exist
just fifteen $\mathcal{G}_{\mathcal{S}}$-invariant polynomials of degree $<8.$
But looking ahead to Section \ref{SSec Eight invariant sextics}, the remaining
eight invariant polynomials are all of degree 6.
\end{proof}

\subsubsection{The eight $\mathcal{G}_{\mathcal{S}}$-invariant sextics
\label{SSec Eight invariant sextics}}

\begin{theorem}
\label{Thm S_111 has degree 6}The Segre variety $\mathcal{S}_{1,1,1}(2)$ has
polynomial degree $6.$
\end{theorem}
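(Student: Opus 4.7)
The plan is to apply Lemma~\ref{Lem pol deg d iff (i), (ii)} to $\psi=\mathcal{S}$, which has odd cardinality $27$, with candidate degree $d=6$. This reduces the problem to proving (i) every hyperplane of $\operatorname{PG}(7,2)$ meets $\mathcal{S}$ in an odd number of points, and (ii) some $5$-flat meets $\mathcal{S}$ in an even number of points.

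For step~(i), the key observation is that
\begin{equation*}
\sum_{p\in\mathcal{S}}p=\sum_{i,j,k}u_i\otimes u_j\otimes u_k=\Bigl(\sum_i u_i\Bigr)\otimes\Bigl(\sum_j u_j\Bigr)\otimes\Bigl(\sum_k u_k\Bigr)=0
\end{equation*}
in $V_8$, since the three nonzero vectors $u_0,u_1,u_2$ of $V_2$ sum to zero over $\operatorname{GF}(2)$. Hence for every nonzero $f\in V_8^{\ast}$ we have $\sum_{p\in\mathcal{S}}f(p)=f(0)=0$, so an even number of points of $\mathcal{S}$ satisfy $f(p)=1$; since $|\mathcal{S}|=27$ is odd, an odd number satisfy $f(p)=0$. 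This gives $\deg Q_{\mathcal{S}}\leq 6$.

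For step~(ii), take two independent functionals $f_1,f_2\in V_8^{\ast}$ and consider the $5$-flat $X:=H_{f_1}\cap H_{f_2}$. Expanding the indicator $[f_l(p)=0]=1-f_l(p)$ as integers and using step~(i) to kill the linear terms modulo~$2$ yields
\begin{equation*}
|X\cap\mathcal{S}|\equiv 1+\sum_{p\in\mathcal{S}}f_1(p)f_2(p)\pmod 2,
\end{equation*}
so $X$ will be $\mathcal{S}$-even precisely when $\sum_{p\in\mathcal{S}}f_1(p)f_2(p)\equiv 1\pmod 2$. The crucial idea is to choose $f_l=\alpha_l\otimes\beta_l\otimes\gamma_l$ decomposable, with $\alpha_l,\beta_l,\gamma_l\in V_2^{\ast}\setminus\{0\}$; then the sum factorises as the product of three sums of the form $\sum_{i\in\{0,1,2\}}\alpha_1(u_i)\alpha_2(u_i)$. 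A short case check, using that each nonzero $\alpha\in V_2^{\ast}$ vanishes on exactly one of $u_0,u_1,u_2$, shows that this factor is odd iff $\alpha_1\neq\alpha_2$. Taking the $\alpha$'s, $\beta$'s and $\gamma$'s pairwise distinct therefore makes all three factors, and hence the product, odd, producing an $\mathcal{S}$-even $5$-flat and giving $\deg Q_{\mathcal{S}}\geq 6$.

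The main obstacle is step~(ii): producing a concrete $\mathcal{S}$-even $5$-flat. Straightforward coordinate $5$-flats spanned by six of the basis vectors in~\eqref{Basis e_i} stubbornly give odd intersections with $\mathcal{S}$. The essential insight is to let the defining functionals $f_l$ mirror the tensor-product structure of $\mathcal{S}$ itself by taking them decomposable, which makes the mod-$2$ counting factorise into three independent one-dimensional pieces.
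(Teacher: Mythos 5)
Your argument is correct, and it reaches the conclusion by a genuinely different route from the paper's. For the upper bound $\deg Q_{\mathcal{S}}\leq 6$ the paper writes $\mathcal{S}$ as the disjoint union of the nine generators $L_{ij}^{3}$ of (\ref{27 generators L}) and sums their degree-$6$ equations supplied by Lemma \ref{Lem poldeg of flat X}; this has the side benefit of producing the explicit polynomial $Q_{6}$ that is needed again in Theorem \ref{Thm explicit Q_6}. You instead verify condition (i) of Lemma \ref{Lem pol deg d iff (i), (ii)} directly from the multilinear identity $\sum_{p\in\mathcal{S}}p=\bigl(\sum_{i}u_{i}\bigr)\otimes\bigl(\sum_{j}u_{j}\bigr)\otimes\bigl(\sum_{k}u_{k}\bigr)=0$, which is coordinate-free and generalizes verbatim to $\mathcal{S}_{1,\ldots,1}(2)\subset\operatorname{PG}(2^{m}-1,2)$. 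For the lower bound both proofs invoke condition (ii), but where the paper exhibits a concrete coordinate flat $\langle e_{1},e_{3},e_{5},e_{7},e_{2}+e_{4}+e_{6}\rangle$ meeting $\mathcal{S}$ in the four points $e_{1},e_{3},e_{5},e_{7}$ (note that, as printed, this is spanned by only five independent points and so is a $4$-flat rather than the $5$-flat required by the lemma, so a sixth generator must be supplied there), your construction from two decomposable functionals $f_{l}=\alpha_{l}\otimes\beta_{l}\otimes\gamma_{l}$ with pairwise distinct factors yields a genuine $\mathcal{S}$-even $5$-flat via a clean factorization of the count modulo $2$; the only point worth making explicit is that distinct nonzero decomposable tensors over $\operatorname{GF}(2)$ are automatically linearly independent, so your $f_{1},f_{2}$ do define a flat of codimension $2$. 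In short, the paper's proof buys the explicit equation of $\mathcal{S}$, while yours buys a structural, computation-free argument.
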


\begin{proof}
From (\ref{Segre 111}) and (\ref{27 generators L}) observe that $\mathcal{S}$
is the union of the nine mutually disjoint lines $L_{ij}^{3},~i,j\in
\{0,1,2\},$ where by lemma \ref{Lem poldeg of flat X} each line $L_{ij}^{3}$
has equation $P_{ij}(x)=0$ where $\deg P_{ij}=6.$ Consider the polynomial
$Q_{6}:=%
%TCIMACRO{\tsum \nolimits_{i=0}^{2}}%
%BeginExpansion
{\textstyle\sum\nolimits_{i=0}^{2}}
%EndExpansion%
%TCIMACRO{\tsum \nolimits_{j=0}^{2}}%
%BeginExpansion
{\textstyle\sum\nolimits_{j=0}^{2}}
%EndExpansion
P_{ij}.$ Each point $x\in\mathcal{S}\ $lies on precisely one of the nine lines
$L_{ij}^{3},$ and so $Q_{6}(x)=8=0,$ while if $x\in\operatorname*{PG}(7,2)$ is
exterior to $\mathcal{S}_{1,1,1}(2)$ then it lies on none of the nine lines
$L_{ij}$, and so $Q_{6}(x)=9=1.$ So $\psi_{Q_{6}}=\mathcal{S}=\mathcal{O}%
_{5}.$ (Of course in this proof we could equally well have used instead the
nine lines $L_{ij}^{1}$ or the nine lines $L_{ij}^{2}.)$ From its definition
the polynomial $Q_{6}$ has degree $\leq6.$ To prove that $\deg Q_{6}=6,$
consider the 5-flat $X:=\langle e_{1},e_{3},e_{5},e_{7},e_{2}+e_{4}%
+e_{6}\rangle$ and observe that $X$ meets $\mathcal{S}$ in an even number of
points, namely the 4 points $\{e_{1},e_{3},e_{5},e_{7}\}.$ Hence from lemma
\ref{Lem pol deg d iff (i), (ii)} it follows that $\deg Q_{6}=6.$
\end{proof}

\smallskip

The fact that $Q_{6}$ has degree 6 can also be shown by an explicit
calculation, as in the proof of the next theorem. For this theorem, in
addition to the polynomials defined in Section \ref{SSec G_B InvtPoly deg <5},
we also need the polynomials $P_{5}$ and $P_{6}$ defined by%
\begin{align}
P_{5}  &  =x_{1}x_{3}x_{5}x_{7}(x_{2}+x_{4}+x_{6}+x_{8})+x_{2}x_{4}x_{6}%
x_{8}(x_{1}+x_{3}+x_{5}+x_{7}),\nonumber\\
P_{6}  &  =x_{1}x_{2}x_{3}x_{6}x_{7}x_{8}+x_{1}x_{2}x_{4}x_{5}x_{7}x_{8}%
+x_{1}x_{3}x_{4}x_{5}x_{6}x_{8}+x_{2}x_{3}x_{4}x_{5}x_{6}x_{7}.
\label{P5=  P6=}%
\end{align}
Observe that the polynomial $P_{5}$ can also be expressed as a product:
$P_{5}=P_{1}P_{4}^{\prime\prime\prime}.$

\begin{theorem}
\label{Thm explicit Q_6}The Segre variety $\mathcal{S}_{1,1,1}(2)$ is a
hypersurface in $\operatorname*{PG}(7,2)$ which has the sextic equation
$Q_{6}(x)=0,$ where%
\begin{equation}
Q_{6}=P_{2}^{\prime}+P_{2}^{\prime\prime}+P_{3}^{\prime\prime}+P_{4}^{\prime
}+P_{4}^{\prime\prime\prime}+P_{4}^{\text{v}}+P_{5}+P_{6}. \label{Q_6 =}%
\end{equation}

\end{theorem}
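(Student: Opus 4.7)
The plan is to determine the explicit form of the (unique, reduced) polynomial $Q_6$ guaranteed by Theorem \ref{Thm S_111 has degree 6} by extending the coefficient-determination method used in the proof of Theorem \ref{Thm Q_4} from degrees $\leq 4$ to degrees $5$ and $6$. Since $\mathcal{S}$ is $\mathcal{G}_{\mathcal{S}}$-invariant, so is $Q_6$; in particular it is $\mathcal{G}_{\mathcal{B}}$-invariant and therefore a $\operatorname{GF}(2)$-linear combination of $\mathcal{G}_{\mathcal{B}}$-orbit sums of monomials of degrees $1,\ldots,6$.

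First, I would enumerate the $\mathcal{G}_{\mathcal{B}}$-invariant monomial sums of degrees $5$ and $6$, analogously to the degree-$\leq 4$ enumeration of Section \ref{SSec G_B InvtPoly deg <5}. The $\mathcal{G}_{\mathcal{B}}\cong\operatorname{Sym}(4)\times Z_{2}$ action on the $\binom{8}{k}$ monomials of degree $k$ produces orbits in bijection with the weight-$k$ point-orbits of Table~1; the polynomials $P_5$ and $P_6$ of (\ref{P5=  P6=}) correspond to the orbits $\mathcal{O}_{4,5}$ and $\mathcal{O}_{4,6}$, respectively. Writing $Q_6 = R_{\leq 4} + R_5 + R_6$, with each $R_d$ an unknown $\mathcal{G}_{\mathcal{B}}$-invariant of degree $d$, I would substitute the orbit representatives $p_{i,w}$ from Table~1 into the constraint that $Q_6(p)=0$ iff $p\in\mathcal{O}_5$. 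Since any monomial of degree $d$ vanishes on every point of weight $<d$, processing the $p_{i,w}$ in order of non-decreasing weight triangularises the resulting linear system: the weight-$\leq 4$ representatives fix $R_{\leq 4}$ (which should reduce to $P_2' + P_2'' + P_3'' + P_4' + P_4''' + P_4^{\text{v}}$), then the three weight-$5$ orbits determine $R_5$, and finally the three weight-$6$ orbits determine $R_6$.

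The main obstacle is sheer bookkeeping: in degree $6$ there are notably many $\mathcal{G}_{\mathcal{B}}$-orbits of monomials, and verifying that precisely $P_5$ and $P_6$ survive with coefficient $1$ while all other orbit-sums cancel requires a careful calculation (one that is most safely carried out with computer assistance). A more economical alternative, once (\ref{Q_6 =}) is taken as a guess, is to verify the identity by evaluating both sides on a single representative from each of the twenty $\mathcal{G}_{\mathcal{B}}$-orbits $\mathcal{O}_{i,w}$ of Table~1; by $\mathcal{G}_{\mathcal{B}}$-invariance of both sides, agreement on these twenty points is equivalent to the full claim.
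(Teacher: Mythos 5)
Your strategy is correct and genuinely different from the paper's. The paper proves Theorem \ref{Thm explicit Q_6} by brute force: each line $L_{ij}^{3}$ is written via Lemma \ref{Lem poldeg of flat X} as $1+\prod_{k=1}^{6}(1+f_{k})$, the nine resulting sextics are summed and reduced, and the bookkeeping is delegated to Magma. You instead push the coefficient-matching method of Theorems \ref{Thm Q_4} and \ref{Thm Q'_4} up to degree $6$: since the reduced polynomial of $\mathcal{O}_{5}$ is unique and $\mathcal{G}_{\mathcal{B}}$ acts by permuting coordinates, $Q_{6}$ is a sum of $\mathcal{G}_{\mathcal{B}}$-orbit sums of monomials of degrees $1$ through $6$ (degree $\leq 6$ being guaranteed by Theorem \ref{Thm S_111 has degree 6}), and evaluating at the representatives $p_{i,w}$ of Table~1 in order of non-decreasing weight gives a block-triangular linear system with identity diagonal blocks --- there are $19$ orbit sums of degree $\leq 6$ and $19$ orbits of weight $\leq 6$, so the coefficients are forced. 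Your verification-only variant is also valid: since both sides are $\mathcal{G}_{\mathcal{B}}$-invariant, checking (\ref{Q_6 =}) on one representative per orbit suffices by uniqueness of the reduced polynomial. This buys a structured, hand-checkable derivation where the paper offers only a machine computation, at the cost of having to enumerate the degree-$5$ and degree-$6$ orbit sums.

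Two slips, neither of which damages the argument but both worth correcting. First, $P_{5}$ and $P_{6}$ are the orbit sums attached to $\mathcal{O}_{1,5}$ and $\mathcal{O}_{1,6}$, not to $\mathcal{O}_{4,5}$ and $\mathcal{O}_{4,6}$: the monomials of $P_{5}$ are the eight $5$-sets containing one of the tetrahedra $\{1,3,5,7\}$, $\{2,4,6,8\}$ (matching $|\mathcal{O}_{1,5}|=8$ with representative $135u=e_2+e_4+e_6+e_7+e_8$), and those of $P_{6}$ are the four complements of space diagonals (matching $|\mathcal{O}_{1,6}|=4$ with representative $18u$); the orbit sizes $24$ and $12$ of $\mathcal{O}_{4,5}$ and $\mathcal{O}_{4,6}$ already rule out your identification. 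Second, Table~1 lists $21$ $\mathcal{G}_{\mathcal{B}}$-orbits, not $20$, though only the $19$ of weight $\leq 6$ are needed to pin down the coefficients; the two of weight $7$ and $8$ are then automatically consistent because the degree-$\leq 6$ polynomial is already known to exist.
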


\begin{proof}
With the aid Magma, see \cite{MAGMA}, these explicit coordinate forms for
$Q_{6}=%
%TCIMACRO{\tsum \nolimits_{i=0}^{2}}%
%BeginExpansion
{\textstyle\sum\nolimits_{i=0}^{2}}
%EndExpansion%
%TCIMACRO{\tsum \nolimits_{j=0}^{2}}%
%BeginExpansion
{\textstyle\sum\nolimits_{j=0}^{2}}
%EndExpansion
P_{ij}$ were obtained by use of Lemma \ref{Lem poldeg of flat X}.
\end{proof}

By adding $Q_{6}$ to the seven polynomials in (\ref{Table of 7 invariant Q})
we obtain a further seven invariant polynomials of degree 6. Since we have
previously obtained six invariant polynomials of degree 4 and one of degree 2,
we have therefore obtained the full quota, see after equation
(\ref{psi uses O5}), of fifteen $\mathcal{G}_{\mathcal{S}}$-invariant
polynomials of degree $<8.$

\begin{example}
Consider the sextic polynomial $Q_{6}^{\prime}=Q_{6}+Q_{4}+Q_{4}^{\prime},$
which has the particularly simple form $Q_{6}^{\prime}=P_{5}+P_{6}$. Since
$\psi_{Q_{6}}=\mathcal{O}_{5}$ and $\psi_{Q_{4}+Q_{4}^{\prime}}=\mathcal{O}%
_{1}\cup\mathcal{O}_{5}$ it follows that $\psi_{Q_{6}^{\prime}}=(\mathcal{O}%
_{1})^{c}=\mathcal{O}_{2}\cup\mathcal{O}_{3}\cup\mathcal{O}_{4}\cup
\mathcal{O}_{5}.$
\end{example}

\paragraph{Afterthought.}

Consider the sextic polynomial $Q_{2}Q_{4}^{\prime}.$ Observe from the table
(\ref{Table of 7 invariant Q}) that $(Q_{2}Q_{4}^{\prime})(x)\neq0$ only for
$x\in\mathcal{O}_{1},$ leading to an alternative derivation of the sextic
equation for the Segre variety $\mathcal{S}_{1,1,1}(2)=\mathcal{O}_{5},$
namely in the form%
\[
Q_{2}(x)Q_{4}^{\prime}(x)+Q_{4}(x)+Q_{4}^{\prime}(x)=0,
\]
thus avoiding the computation involved in the previous proof of Theorem
\ref{Thm explicit Q_6}. (However we still need to sort out the $4\times
(12+8+24+6)=200$ terms arising from the product of $Q_{2}(x)$ with
$Q_{4}^{\prime}(x)!)$

\medskip

\bigskip

\noindent{\small Ron \thinspace Shaw, Centre for Mathematics,}

{\small \noindent University of Hull, Hull HU6 7RX, UK}

{\small r.shaw@hull.ac.uk}

{\small \medskip}

{\small \noindent Neil Gordon, Department of Computer Science, }

{\small \noindent University of Hull, Hull HU6 7RX, UK}

{\small n.a.gordon@hull.ac.uk}

{\small \medskip}

{\small \noindent Hans Havlicek, Institut f\"{u}r Diskrete Mathematik und
Geometrie,}

{\small \noindent Technische Universit\"{a}t, Wiedner Hauptstra\ss e
8--10/104}

{\small \noindent A-1040 Wien, Austria}

{\small havlicek@geometrie.tuwien.ac.at}

\end{document}